\def\epsilon{\varepsilon}
\newcommand{\ssm}{\smallsetminus}
\renewcommand{\phi}{\varphi}
\DeclareMathOperator{\ind}{\mathrm{ind}} 
\DeclareMathOperator{\igeo}{\mathrm{ind_{geo}}}
\DeclareMathOperator{\iq}{\mathrm{ind}_{\CQ}}
\DeclareMathOperator{\indaut}{\mathrm{ind}}
\newcommand{\Out}{\mathrm{Out}} 
\newcommand{\Aut}{\mathrm{Aut}}
\newcommand{\Stab}{\mathrm{Stab}}
\newcommand{\inv}{^{-1}}
\newcommand{\rank}{\mathrm{rank}}
\newcommand{\Fix}{\mathrm{Fix}} 
\newcommand{\Per}{\mathrm{Per}} 
\newcommand{\Inn}{\mathrm{Inn}}
\newcommand{\Att}{\mathrm{Att}}
\newcommand{\FN}{F_N} % F ou F_n ou F_N ?
\newcommand{\CVN}{\mathrm{CV}_N}
\newcommand{\barCVN}{\bar{\mathrm{CV}}_N} 
\newcommand{\CQ}{{\mathcal Q}}
\newcommand{\iwip}{iwip}
\newcommand{\FR}{FR}
\newcommand{\R}{\mathbb R} 
\newcommand{\Z}{\mathbb Z}
\newcommand{\N}{\mathbb N}
\def\bar{\overline} 
\def\tilde{\widetilde} 
\def\hat{\widehat}
\newtheorem{thm}{Theorem}[section]
\newtheorem{lem}[thm]{Lemma}
\newtheorem{prop}[thm]{Proposition}
\newtheorem*{thm*}{Theorem}
\newtheorem*{prop*}{Proposition}
\newtheorem*{thmbold}{Theorem~\ref{thm:boldconjecture}}
\newtheorem*{propsind}{Propositions~\ref{prop:geomauto} and \ref{prop:indexrank}}
\newtheorem*{thmiwipindec}{Theorem~\ref{thm:iwipindec}}
\newtheorem{defn}[thm]{Definition}
\newtheorem*{defn*}{Definition} 
\newtheorem{rem}[thm]{Remark}
\newtheorem*{rem*}{Remark}
\numberwithin{equation}{section} 
\newcommand{\arxivlink}[1]{\href{http://arxiv.org/abs/#1}{arXiv:#1}}
\begin{document}
\title{Botany of irreducible automorphisms of free groups}

\author{Thierry Coulbois, Arnaud Hilion}

\date{\today }

\begin{abstract}
  We give a classification
  of \iwip\ outer automorphisms of the free group, by discussing the
  properties of their attracting and repelling trees.
\end{abstract}

\maketitle

\section{Introduction}

An outer automorphism $\Phi$ of the free group $\FN$ is \textbf{fully
  irreducible} (abbreviated as \textbf{\iwip}) if no positive power
$\Phi^n$ fixes a proper free factor of $\FN$. Being an iwip is one 
(in fact the most important) of the analogs for free groups of being
pseudo-Anosov for mapping classes of hyperbolic surfaces. Another
analog of pseudo-Anosov is the notion of an atoroidal automorphism: an
element $\Phi\in\Out(\FN)$ is \textbf{atoroidal} or
\textbf{hyperbolic} if no positive power $\Phi^n$ fixes a nontrivial
conjugacy class. Bestvina and Feighn~\cite{bf-combi} and
Brinkmann~\cite{brink} proved that $\Phi$ is atoroidal if and only if
the mapping torus $\FN\rtimes_\Phi\Z$ is Gromov-hyperbolic.

Pseudo-Anosov mapping classes are known to be ``generic'' elements of
the mapping class group (in various senses). Rivin~\cite{rivin} and
Sisto~\cite{sisto} recently proved that, in the sense of random walks,
generic elements of $\Out(\FN)$ are atoroidal iwip automorphisms.

Bestvina and Handel~\cite{bh-traintrack} proved that iwip
automorphisms have the key property of being represented by
(absolute) train-track maps.

A pseudo-Anosov element $f$ fixes two projective classes of measured
foliations $[(\mathcal{F}^+,\mu^+)]$ and $[(\mathcal{F}^-,\mu^-)]$:
\[
(\mathcal{F}^+,\mu^+)\cdot f=(\mathcal{F}^+,\lambda\mu^+)\text{ and
}(\mathcal{F}^-,\mu^-)\cdot f=(\mathcal{F}^-,\lambda\inv\mu^-)
\]
where $\lambda>1$ is the expansion factor of $f$. Alternatively,
considering the dual $\R$-trees $T^+$  and
$T^-$, we get:
\[
T^+\cdot f=\lambda T^+\text{ and }T^-\cdot f=\lambda\inv T^-.
\]

We now discuss the analogous situation for iwip automorphisms.  The
group of outer automorphisms $\Out(\FN)$ acts on the \textbf{outer
  space} $\CVN$ and its boundary $\partial\CVN$. Recall that the
compactified outer space $\barCVN=\CVN\cup\partial\CVN$ is made up of
(projective classes of) $\R$-trees with an action of $\FN$ by
isometries which is minimal and very small. See \cite{vogt-survey} for
a survey on outer space.  An \iwip\ outer automorphism $\Phi$ has
North-South dynamics on $\barCVN$: it has a unique attracting fixed
tree $[T_\Phi]$ and a unique repelling fixed tree $[T_{\Phi\inv}]$ in
the boundary of outer space (see \cite{ll-north-south}):
\[
T_\Phi\cdot\Phi=\lambda_\Phi T_\Phi\text{ and
}T_{\Phi\inv}\cdot\Phi=\frac 1{\lambda_{\Phi\inv}}T_{\Phi\inv},
\]
where $\lambda_\Phi>1$ is the \textbf{expansion factor} of $\Phi$
(i.e. the exponential growth rate of non-periodic conjugacy classes).

Contrary to the pseudo-Anosov setting, the expansion factor
$\lambda_\Phi$ of $\Phi$ is typically different from the expansion
factor $\lambda_{\Phi\inv}$ of $\Phi\inv$. More generally, qualitative
properties of the fixed trees $T_\Phi$ and $T_{\Phi\inv}$ can be
fairly different. This is the purpose of this paper to discuss and
compare the properties of $\Phi$, $T_\Phi$ and $T_{\Phi\inv}$.

First, the free group, $\FN$, may be realized as the fundamental group
of a surface $S$ with boundary.  It is part of folklore that, if
$\Phi$ comes from a pseudo-Anosov mapping class on $S$, then its limit
trees $T_\Phi$ and $T_{\Phi\inv}$ live in the Thurston boundary of
Teichmüller space: they are dual to a measured foliation on the
surface. Such trees $T_\Phi$ and $T_{\Phi\inv}$ are called
\textbf{surface trees} and such an \iwip\ outer automorphism $\Phi$ is
called \textbf{geometric} (in this case $S$ has exactly one boundary
component).

The notion of surface trees has been generalized (see for instance
\cite{best-survey}). An $\R$-tree which is transverse to measured
foliations on a finite CW-complex is called \textbf{geometric}. It may
fail to be a surface tree if the complex fails to be a surface.

If $\Phi$ does not come from a pseudo-Anosov mapping class and if
$T_\Phi$ is geometric then $\Phi$ is called \textbf{parageometric}. For a
parageometric \iwip\ $\Phi$, Guirardel \cite{guir-core} and Handel and
Mosher \cite{hm-parageometric} proved that the repelling tree
$T_{\Phi\inv}$ is not geometric. So we have that, $\Phi$ comes from a
pseudo-Anosov mapping class on a surface with boundary if and only if
both trees $T_\Phi$ and $T_{\Phi\inv}$ are geometric. Moreover in this
case both trees are indeed surface trees.

In our paper \cite{ch-a} we introduced a second dichotomy for trees in
the boundary of Outer space with dense orbits. For a tree $T$, we
consider its \textbf{limit set} $\Omega\subseteq \bar T$ (where $\bar
T$ is the metric completion of $T$). The limit set $\Omega$ consists
of points of $\bar T$ with at least two pre-images by the map
$\CQ:\partial\FN\to\hat T=\bar T\cup\partial T$ introduced by Levitt
and Lustig~\cite{ll-north-south}, see Section~\ref{sec:botanictrees}.
We are interested in the two extremal cases: A tree $T$ in the
boundary of Outer space with dense orbits is of \textbf{surface type}
if $T\subseteq\Omega$ and $T$ is of \textbf{Levitt type} if $\Omega$
is totally disconnected. As the terminology suggests, a surface tree is
of surface type. Trees of Levitt type where discovered by
Levitt~\cite{levi-pseudogroup}.

Combining together the two sets of properties, we introduced in~\cite{ch-a} the following definitions.
A tree $T$ in $\partial\CVN$ with dense orbits is 
\begin{itemize}
\item a \textbf{surface tree} if it is both geometric and of surface type;
\item \textbf{Levitt} if it is geometric and of Levitt type;
\item \textbf{pseudo-surface} if it is not geometric and of surface type;
\item \textbf{pseudo-Levitt} if it is not geometric and of Levitt type
\end{itemize}

The following Theorem is the main result of this paper.

\begin{thmbold}
  Let $\Phi$ be an \iwip\ outer automorphism of $\FN$. Let $T_\Phi$
  and $T_{\Phi\inv}$ be its attracting and repelling trees. Then
  exactly one of the following occurs
\begin{enumerate} 
\item The trees $T_\Phi$ and $T_{\Phi\inv}$ are surface
  trees. Equivalently $\Phi$ is geometric.
\item The tree $T_\Phi$ is Levitt (i.e. geometric and of Levitt type),
  and the tree $T_{\Phi\inv}$ is pseudo-surface (i.e. non-geometric
  and of surface type). Equivalently $\Phi$ is parageometric.
\item The tree $T_{\Phi\inv}$ is Levitt (i.e. geometric and of Levitt
  type), and the tree $T_{\Phi}$ is pseudo-surface (i.e. non-geometric
  and of surface type). Equivalently $\Phi\inv$ is parageometric.
\item The trees $T_\Phi$ and $T_{\Phi\inv}$ are pseudo-Levitt
  (non-geometric and of Levitt type).
\end{enumerate}
Case (1) corresponds to toroidal iwips whereas cases (2), (3) and (4)
corresponds to atoroidal iwips. In case (4) the automorphism $\Phi$ is
called \textbf{pseudo-Levitt}.
\end{thmbold}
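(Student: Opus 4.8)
The plan is to treat the two dichotomies of the introduction as two binary invariants attached to each of the trees $T_\Phi$ and $T_{\Phi\inv}$: whether the tree is geometric (the Gaboriau--Levitt dichotomy, detected by the geometric index $\igeo$) and whether it is of surface type or of Levitt type (detected by the map $\CQ$ and the limit set $\Omega$). Once both invariants are known to be well defined and exhaustive for these two trees, the four cases of the theorem are exactly four prescribed joint values of the resulting pair of two-bit types, so they are automatically mutually exclusive, and the work reduces to showing \emph{which} joint values an iwip can realize. First I would record that $T_\Phi$ and $T_{\Phi\inv}$ are indecomposable, so that by \cite{ch-a} each limit set $\Omega\subseteq\bar T$ is either all of $\bar T$ (surface type) or totally disconnected (Levitt type); this makes the surface type/Levitt type dichotomy exhaustive for the two trees at hand.

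For the geometric coordinate I would simply invoke the results recalled in the introduction. By the folklore description of geometric iwips together with Guirardel \cite{guir-core} and Handel--Mosher \cite{hm-parageometric}, the pair of geometric statuses of $(T_\Phi,T_{\Phi\inv})$ takes exactly the values: both geometric, iff $\Phi$ is geometric; $T_\Phi$ geometric and $T_{\Phi\inv}$ not, iff $\Phi$ is parageometric; the mirror value, iff $\Phi\inv$ is parageometric; and neither geometric otherwise. This already yields the geometric content of the four cases and their stated equivalences.

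The heart of the argument is the surface type/Levitt type coordinate, where I would establish the \textbf{duality} that, for an iwip, $T_\Phi$ is of surface type if and only if $T_{\Phi\inv}$ is geometric (and symmetrically after exchanging $\Phi$ and $\Phi\inv$). The mechanism is index-theoretic: surface type is the maximality $\iq(T)=2N-2$ of the $\CQ$-index while geometric is the maximality $\igeo(T)=2N-2$ of the geometric index, and for an iwip one can compare the $\CQ$-index of one tree with the geometric index of the other through the singularities of the attracting and repelling laminations read off a train-track representative \cite{bh-traintrack}. Two implications carry the real weight. The first is that a geometric tree which is of surface type is in fact a surface tree; I would obtain this from the Rips-machine decomposition of a geometric tree, showing that surface type forces every minimal component of the underlying foliated complex to be of interval-exchange (surface) type, whence $\Phi$ is geometric. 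Consequently the geometric tree $T_\Phi$ of a parageometric $\Phi$, not being a surface tree, must be of Levitt type. The second, harder, implication is that the non-geometric repelling tree of a parageometric automorphism is nonetheless of surface type; this is the statement I expect to be the main obstacle, as it requires reading the surface-type property of $T_{\Phi\inv}$ off the geometric structure of the opposite tree $T_\Phi$ via the tree/lamination duality of \cite{ch-a}.

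Finally I would assemble the two coordinates. Writing $a$ for the truth value of ``$T_\Phi$ is geometric'' and $b$ for that of ``$T_{\Phi\inv}$ is geometric'', the duality says $T_\Phi$ is geometric iff $a$ and of surface type iff $b$, while $T_{\Phi\inv}$ is geometric iff $b$ and of surface type iff $a$; the four values of $(a,b)$ reproduce precisely cases (1)--(4), and a surface tree being of surface type makes case (1) consist of two surface trees as asserted. For the toroidal/atoroidal split I would use that a geometric iwip is pseudo-Anosov on a surface with one boundary component, whose boundary curve is a $\Phi$-periodic conjugacy class, so $\Phi$ is toroidal; conversely, an iwip possessing a periodic conjugacy class is geometric, so every non-geometric iwip is atoroidal. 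Hence case (1) is the toroidal case and cases (2)--(4) the atoroidal ones, which completes the classification.
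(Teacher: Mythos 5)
Your architecture coincides with the paper's: indecomposability of the two trees (Theorem~\ref{thm:iwipindec}) makes the surface/Levitt dichotomy exhaustive, geometricity and surface type are detected by maximality of $\igeo$ and $\iq$ respectively, and the whole classification reduces to the cross duality ``$T_\Phi$ is of surface type if and only if $T_{\Phi\inv}$ is geometric,'' i.e.\ to the index identity $\iq(T_\Phi)=\igeo(T_{\Phi\inv})$ of Theorem~\ref{thm:QindexTindex}. The gap is that you do not prove this identity: you split it into two implications, offer only a speculative Rips-machine argument for one, and explicitly flag the other (that the non-geometric tree $T_{\Phi\inv}$ of a parageometric $\Phi$ is of surface type) as ``the main obstacle.'' That obstacle is the entire content of the theorem, so as written the proposal is a correct reduction plus an unproved core.

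The missing idea is the intermediary $2\indaut(\Phi\inv)$, the Gaboriau--Jaeger--Levitt--Lustig index of the outer automorphism, which ties the two trees together after replacing $\Phi$ by an \FR\ power. One half, $2\indaut(\Phi\inv)=\igeo(T_{\Phi\inv})$, is already in \cite{gjll} (Proposition~\ref{prop:geomauto}). The other half, $2\indaut(\Phi\inv)=\iq(T_{\Phi})$, is Proposition~\ref{prop:indexrank}: in the non-geometric case each automorphism $\phi\in\Phi$ acts on $T_\Phi$ through a homothety $H$, and Lemma~\ref{lem:Qindexphiindexlocal} identifies the $\CQ$-fiber of the center $C_H$ with the set of repelling fixed points of $\partial\phi$, that is, with the attracting fixed points of $\partial\phi\inv$; finiteness of the $\CQ$-fibers together with property (FR\ref{fr2}) then matches $\FN$-orbits of large $\CQ$-fibers bijectively with essential isogredience classes in $\Phi\inv$. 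Your phrase about reading singularities of the laminations off a train-track representative gestures in this direction but supplies no mechanism. Note also that your first ``implication'' (geometric plus surface type forces a surface tree, hence $\Phi$ geometric) need not be argued separately via the Rips machine: once the index identity is in hand it follows immediately, since $T_{\Phi\inv}$ non-geometric gives $\iq(T_\Phi)=\igeo(T_{\Phi\inv})<2N-2$, so the geometric tree $T_\Phi$ of a parageometric $\Phi$ is of Levitt type.
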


Gaboriau, Jaeger, Levitt and Lustig \cite{gjll} introduced the notion
of an \textbf{index} $\indaut(\Phi)$, computed from the
rank of the fixed subgroup and from the number of attracting fixed
points of the automorphisms $\phi$ in the outer class $\Phi$. Another
index for a tree $T$ in $\barCVN$ has been defined and studied by
Gaboriau and Levitt~\cite{gl-rank}, we call it the \textbf{geometric
  index} $\igeo(T)$. Finally in our paper~\cite{ch-a} we introduced
and studied the \textbf{$\CQ$-index} $\iq(T)$ of an $\R$-tree $T$ in
the boundary of outer space with dense orbits. The two indices
$\igeo(T)$ and $\iq(T)$ describe qualitative properties of the tree
$T$ \cite{ch-a}. We define these indices and recall our botanical
classification of trees in Section~\ref{sec:botanictrees}.

The key to prove Theorem~\ref{thm:boldconjecture} is:

\begin{propsind} 
Let $\Phi$ be an \iwip\ outer automorphism of $\FN$. Let $T_\Phi$ and
$T_{\Phi\inv}$ be its attracting and repelling trees. Replacing $\Phi$
by a suitable power, we have \[
2\indaut(\Phi)=\igeo(T_\Phi)=\iq(T_{\Phi\inv}). \]
\end{propsind}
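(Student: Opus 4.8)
The plan is to prove the two equalities
\[
2\indaut(\Phi)=\igeo(T_\Phi)\qquad\text{and}\qquad 2\indaut(\Phi)=\iq(T_{\Phi\inv})
\]
separately, each by decomposing the three indices as sums of local contributions and matching them orbit by orbit. First I would replace $\Phi$ by a power admitting a stable (rotationless) train-track representative $f\colon\Cayl\to\Cayl$ with expansion factor $\lambda_\Phi$: every vertex is fixed, every periodic direction is fixed, and every periodic Nielsen path is fixed. None of the three indices changes under passing to a positive power, so this is harmless. The unifying object is the set of \emph{principal lifts} $\phi$ of $\Phi$, i.e.\ representatives in $\Aut(\FN)$ with $\rank(\Fix(\phi))\ge 1$ or with at least three fixed points on $\partial\FN$; up to isogredience these are finite in number, and $\indaut(\Phi)=\sum_{[\phi]}\indaut(\phi)$ with local term $\indaut(\phi)=\rank(\Fix(\phi))-1+\tfrac12\,a(\phi)$, where $a(\phi)$ counts the attracting fixed points of $\hat\phi$ on $\partial\FN$ modulo $\Fix(\phi)$. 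The whole argument consists in realizing each principal lift simultaneously as a branch point of $T_\Phi$ and as the centre of a contracting homothety of $T_{\Phi\inv}$.

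For the second equality I would use the map $\CQ\colon\partial\FN\to\hat T_{\Phi\inv}$ of Levitt and Lustig. Since $T_{\Phi\inv}$ is repelling, $\Phi$ acts on it by the contracting homothety of factor $\lambda_{\Phi\inv}\inv<1$; more precisely each principal lift $\phi$ induces a homothety $H_\phi$ of $T_{\Phi\inv}$ with the equivariance $\CQ\circ\hat\phi=H_\phi\circ\CQ$. A contracting homothety of an $\R$-tree has a unique centre $P_\phi^{-}$, and the equivariance forces every attracting fixed point of $\hat\phi$ to map to $P_\phi^{-}$, while the repelling fixed points escape to $\partial T_{\Phi\inv}$. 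Hence the fibre $\CQ\inv(P_\phi^{-})$ is exactly the set of attracting fixed points of $\hat\phi$, the stabilizer of $P_\phi^{-}$ is $\Fix(\phi)$, and the local $\CQ$-index at $[P_\phi^{-}]$ is $2\rank(\Fix(\phi))-2+a(\phi)=2\indaut(\phi)$. Summing over isogredience classes, and checking that no other point of $\Omega$ contributes, yields $\iq(T_{\Phi\inv})=2\indaut(\Phi)$.

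For the first equality I would compute the geometric index of the attracting tree $T_\Phi$ directly from the train-track map, following Gaboriau and Levitt: the branch points of $T_\Phi$ together with their stabilizers and gates are read off from the fixed local structure of $f$. A principal lift $\phi$ corresponds to a branch point $P_\phi^{+}$ of $T_\Phi$ whose stabilizer is again $\Fix(\phi)$ and whose gates (orbits of directions under $\Stab(P_\phi^{+})$) are in bijection with the fixed legal directions of $f$ at the corresponding vertex, equivalently with the attracting fixed points of $\hat\phi$. Thus the local geometric index $2\rank(\Stab(P_\phi^{+}))-2+\#\mathrm{gates}=2\rank(\Fix(\phi))-2+a(\phi)=2\indaut(\phi)$, and summing gives $\igeo(T_\Phi)=2\indaut(\Phi)$.

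The main obstacle is the bookkeeping that makes the three orbit decompositions line up exactly: one must show that the correspondence sending an isogredience class of principal lifts to an orbit of branch points of $T_\Phi$ (respectively to an orbit of $\Omega$-points of $T_{\Phi\inv}$) is a bijection, so that no branch point or nontrivial $\CQ$-fibre is left unmatched and no principal lift is double-counted. This is where the hypothesis that $\Phi$ is \iwip\ is essential, through the indecomposability of its limit trees and the control it gives on the number and nature of their branch points; it is also where the rotationless power is needed, so that periodic data becomes genuinely fixed and the counts $a(\phi)$, the gates of $P_\phi^{+}$, and the fibre of $\CQ$ over $P_\phi^{-}$ agree on the nose rather than only after iterating $f$. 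Once the bijection is in place, the remaining step — that the GJLL normalization ($\tfrac12$ and $-1$) doubles to the tree normalization ($2\rank$ and $-2$) — is routine.
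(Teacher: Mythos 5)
Your overall strategy coincides with the paper's: the first equality $2\indaut(\Phi)=\igeo(T_\Phi)$ is quoted there directly from Gaboriau--Jaeger--Levitt--Lustig (your train-track sketch is essentially their argument, which the paper does not reprove), and the second equality is obtained, exactly as you propose, by matching each essential lift $\phi$ with the centre of the associated homothety of a limit tree and computing the $\CQ$-fibre of that centre. (The paper phrases this on $T_\Phi$ with the \emph{expanding} homothety, whose centre has $\CQ$-fibre the \emph{repelling} fixed points of $\partial\phi$, giving $2\indaut(\Phi^{-1})=\iq(T_\Phi)$; this is your statement after exchanging $\Phi$ and $\Phi^{-1}$. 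The paper also treats the geometric case separately, where all indices equal $2N-2$, and otherwise works with a free action, so the stabilizer bookkeeping you worry about disappears.)

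There is, however, one step you present as immediate that is not. From the equivariance $\CQ\circ\partial\phi=H_\phi\circ\CQ$ and the location of the attracting and repelling fixed points you conclude that the fibre $\CQ^{-1}(P_\phi^{-})$ is \emph{exactly} the set of attracting fixed points of $\partial\phi$. The inclusion $\Att(\phi)\subseteq\CQ^{-1}(P_\phi^{-})$ does follow from contraction, but the reverse inclusion does not: equivariance only makes the fibre $\partial\phi$-invariant, and a priori it could contain points that are not fixed by $\partial\phi$ at all. The paper closes this by invoking the finiteness of $\CQ$-fibres (a consequence of the $\CQ$-index bound of \cite{ch-a}), so that every point of the fibre is $\partial\phi$-periodic, then property (FR1) to upgrade periodic to fixed, and finally regularity of fixed points in the non-geometric case together with the fact that the remaining type of fixed point is sent into $\partial T$ to conclude. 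A similar finiteness statement (finitely many orbits of points with at least three $\CQ$-preimages) combined with (FR2) is what resolves the surjectivity issue you correctly flag as the main obstacle --- one iterates until $H^n(P)=wP$ and then descends from $\Phi^n$ to $\Phi$ using (FR2); indecomposability by itself is not what does the work there. With these two finiteness inputs supplied, your plan matches the paper's proof.
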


We prove this Proposition in Sections~\ref{subsec:attracting} and
\ref{sec:iautoiQ}.

To study limit trees of \iwip\ automorphisms, we need to state that
they have the strongest mixing dynamical property, which is called
\textbf{indecomposability}.

\begin{thmiwipindec}
Let $\Phi\in\Out(\FN)$ be an iwip outer automorphism.
The attracting tree $T_\Phi$ of $\Phi$ is indecomposable.
\end{thmiwipindec}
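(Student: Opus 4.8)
The plan is to establish indecomposability of the attracting tree $T_\Phi$ directly from the train-track dynamics of $\Phi$. Recall that a tree $T$ with dense orbits is \emph{indecomposable} (in the sense of Guirardel) if for any two nondegenerate arcs $I$ and $J$ in $T$, there exist finitely many elements $g_1,\dots,g_n\in\FN$ such that $I\subseteq \bigcup_k g_k J$ and consecutive translates $g_kJ\cap g_{k+1}J$ overlap in a nondegenerate arc. This is a strong transitivity property of the $\FN$-action on arcs, and the natural way to produce such coverings is through the expanding substitution dynamics coming from a train-track representative.

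First I would fix an (absolute) train-track map $f\colon \Cayl\to\Cayl$ representing $\Phi$ on a rose or graph $\Cayl$ with $\pi_1(\Cayl)=\FN$, which exists by Bestvina--Handel since $\Phi$ is \iwip. The attracting tree $T_\Phi$ is then obtained as a limit of the rescaled universal covers, and its arcs are described by the \emph{legal} bi-infinite rays in $\tilde\Cayl$; the metric records the expansion factor $\lambda_\Phi$ under iterating $f$. The key structural input is that the transition matrix of $f$ is primitive (a consequence of irreducibility together with passing to a power), so that for any two edges $e, e'$ of $\Cayl$ some iterate $f^m(e)$ contains an occurrence of $e'$ as a subpath. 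I would use this primitivity to show that the $f$-image of any legal segment eventually crosses every edge, which is what lets one arc be covered by translates of another.

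The main step is then to translate this combinatorial covering into the tree. Given nondegenerate arcs $I,J\subseteq T_\Phi$, I would approximate them by legal segments $\alpha,\beta$ in $\tilde\Cayl$ (measured in the limiting pseudometric). Applying a large power $f^m$ expands $\beta$ so that, by primitivity, the turns and edges appearing in $\alpha$ all occur inside $f^m(\beta)$; reading off the group elements that carry these occurrences of the pattern of $\alpha$ into place yields the finite list $g_1,\dots,g_n$ tiling $I$ by translates of $J$ with nondegenerate overlaps. Indecomposability requires these overlaps to be nondegenerate arcs rather than points, which is where legality is essential: because $f$ is train-track, legal segments never get folded, so overlaps of legal subsegments project to genuine subarcs in $T_\Phi$ and do not collapse under the limit map $\CQ$.

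The hard part will be handling the passage to the limit cleanly, namely controlling the overlaps as one iterates $f$ and takes the limiting length function, so that a nondegenerate overlap of legal segments in $\tilde\Cayl$ genuinely survives as a nondegenerate arc in $T_\Phi$ and is not crushed to a point by the quotient defining the tree. I expect to invoke the no-cancellation property of legal paths under iteration of the train-track map to keep the lengths of the overlap arcs bounded below throughout the limiting process. Once this bound is secured, the finite covering with nondegenerate overlaps is immediate, and indecomposability of $T_\Phi$ follows.
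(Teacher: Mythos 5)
Your overall strategy --- train-track representative, primitivity of the transition matrix, expansion of one arc until it contains occurrences of the combinatorial data of the other, then reading off the covering translates --- is the same as the paper's, and your worry about overlaps being crushed in the limit is resolved there exactly as you suggest: the quotient map $p\colon\tilde G\to T_\Phi$ restricts to an isometry on each $f^k(e)$, so overlaps containing a full edge survive as nondegenerate arcs. But there is a genuine gap at the central step. Primitivity only guarantees that the iterated images $f^m(e)$ contain every length-two subpath belonging to $\mathcal{L}_2(f)$, that is, every turn actually taken by iterates of $f$ on edges. The arc you want to cover is an arbitrary arc of $T_\Phi$; its lift to $\tilde G$ will in general cross turns that are \emph{not} in $\mathcal{L}_2(f)$, and no power of $f$ applied to any segment will ever produce such a turn. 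Your proposed fix --- ``approximate $I$ and $J$ by legal segments'' --- does not work, because indecomposability must be verified for the actual arcs: a neighbourhood of a point of the covered arc lying over an unused turn must still be covered by translates with nondegenerate overlap, and your construction produces no translate crossing that point.

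This is precisely where the paper brings in a second consequence of the iwip hypothesis, beyond primitivity: the Whitehead graph of every vertex of the train-track representative is connected (after Bestvina--Feighn--Handel). Connectedness yields Remark~\ref{rem:whitehead}: any turn $ab$ of $G$, used or not, can be bridged by a chain $a_1b_1,\dots,a_qb_q$ of elements of $\mathcal{L}_2(f)$ with $a_{i+1}=b_i^{-1}$, $a_1=a$, $b_q=b$, so that consecutive members of the chain overlap in a full edge. Decomposing the lift of the covered arc into such chains produces the finitely many translates with edge-overlaps, each of which occurs in a high iterate of the covering arc by Lemma~\ref{lem:Ilift}. Without this ingredient your argument covers only arcs all of whose turns lie in $\mathcal{L}_2(f)$, and the proof does not close.
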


The proof of this Theorem is quite independent of the rest of the
paper and is the purpose of Section~\ref{sec:app}. The proof relies on
a key property of iwip automorphisms: they can be represented by
(absolute) train-track maps.

\section{Indecomposability of the attracting tree of an \iwip\ automorphism}\label{sec:app}

Following Guirardel \cite{guir-indecomposable}, a (projective class of) $\R$-tree
$T\in\barCVN$ is {\bf indecomposable} if for all non degenerate arcs
$I$ and $J$ in $T$, there exists finitely many elements $u_1,\dots,u_n$ in $\FN$ such that
\begin{equation}\label{eq:indecomposableunion}
        J\subseteq \bigcup_{i=1}^n u_i I
\end{equation}
and
\begin{equation}\label{eq:indecomposableintersection}
\forall i=1,\dots,n-1,\quad    u_i I\cap u_{i+1}I \text{ is a non degenerate arc.}
\end{equation}

The main purpose of this Section is to prove

\begin{thm}\label{thm:iwipindec}
Let $\Phi\in\Out(\FN)$ be an iwip outer automorphism.
The attracting tree $T_\Phi$ of $\Phi$ is indecomposable.
\end{thm}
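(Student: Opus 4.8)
The plan is to use the train-track representative of $\Phi$ to build a very explicit description of $T_\Phi$ and then verify the covering property \eqref{eq:indecomposableunion}--\eqref{eq:indecomposableintersection} directly. Since $\Phi$ is iwip, by Bestvina--Handel it is represented by an (absolute) train-track map $f\colon \Cayl\to\Cayl$ on a marked rose or graph $\Cayl$ with $\pi_1(\Cayl)=\FN$. The attracting tree $T_\Phi$ can be realized as the limit of the iterates of $f$ acting on the universal cover $\tilde\Cayl$, rescaled by the expansion factor $\lambda_\Phi$; concretely, legal paths in $\Cayl$ map to legal paths under $f$, and the lamination of $\Phi$ provides arcs in $T_\Phi$ via the projection $\CQ$. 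The first step is therefore to fix such an $f$, recall that $f$ is expanding on legal paths and that every sufficiently long path contains a legal subpath of definite length (a consequence of the bounded cancellation / no-illegal-turn-in-$f^k$-image property for train-track maps).

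Next I would reduce the indecomposability condition to a statement about legal segments. Given non-degenerate arcs $I$ and $J$ in $T_\Phi$, I would approximate each by the image of a finite legal path in $\tilde\Cayl$: since points of $T_\Phi$ are obtained as limits of legal paths, $I$ contains (the image of) some legal segment $\tilde I_0$, and $J$ is contained in the image of some legal path $\tilde J_0$. The key step is then to show that any legal segment can be covered by finitely many translates of a fixed legal segment with consecutive overlaps that are themselves non-degenerate. Here I would exploit that $f$ is a train-track map: applying a high power $f^k$ expands $\tilde I_0$ so that $f^k(\tilde I_0)$ is a long legal path, and by the mixing property of the transition matrix (which is primitive because $\Phi$ is iwip, hence the irreducible matrix has a power that is strictly positive) every edge, and hence every short legal pattern, reappears inside $f^k(\tilde I_0)$ with controlled frequency. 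Translating these occurrences back by the appropriate group elements $u_i\in\FN$ yields the required chain of overlapping translates of $I$ that covers $J$.

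Concretely, to arrange the overlaps to be non-degenerate arcs rather than single points, I would choose the covering segments to advance by strictly less than their own length along a common legal path, so that $u_iI\cap u_{i+1}I$ always contains a legal sub-arc of positive length; the primitivity of the transition matrix guarantees that a single long legal word contains overlapping copies of any prescribed shorter legal word, and these copies differ by deck transformations $u_i$. The passage from paths in $\tilde\Cayl$ to arcs in $T_\Phi$ is made precise by the map $\CQ\colon\partial\FN\to\hat T$, under which legal bi-infinite rays project to well-defined points and legal segments project to arcs, so the combinatorial covering in $\tilde\Cayl$ transfers to the metric covering in $T_\Phi$.

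The main obstacle I expect is the last transfer step: controlling how the combinatorial legal-path structure in $\tilde\Cayl$ interacts with the metric and the identifications made in passing to $T_\Phi$ via $\CQ$. In $T_\Phi$ distinct legal paths may be folded together, so I must ensure that the overlaps $u_iI\cap u_{i+1}I$ remain \emph{non-degenerate arcs} in the quotient metric and are not collapsed to points, and dually that the covering $J\subseteq\bigcup u_iI$ is genuine in $T_\Phi$ and not merely combinatorial. Handling this requires a careful use of the fact that legal paths embed isometrically (up to the scaling) into $T_\Phi$ and that the only identifications come from illegal turns, which a legal segment by definition avoids; making this rigorous, rather than the covering combinatorics itself, is where the real work lies.
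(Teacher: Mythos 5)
Your overall strategy coincides with the paper's: take a train-track representative $f\colon G\to G$, use the expansion of $f$ to blow a non-degenerate sub-arc of $I$ up to a full edge of $\tilde G$, and then use primitivity of the transition matrix so that every length-two pattern occurring in the lamination appears in a high iterate $f^k(e)$; this is exactly the content of the paper's Lemma~\ref{lem:Ilift}. But there is a genuine gap at the point where you cover $J$. You assert that ``$J$ is contained in the image of some legal path $\tilde J_0$'' and then cover $J$ by overlapping copies of a legal word sliding along a \emph{common legal path}. A general arc $J\subseteq T_\Phi$ does not lift to a legal path: its natural lift $J'=[x',y']\subseteq\tilde G$ (with $p(x'),p(y')$ the endpoints of $J$) crosses arbitrary turns of $G$, including legal turns that are never taken by the lamination and hence never occur in any $f^k(e)$. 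Your occurrence/primitivity argument only produces translates of $I$ realizing turns in $\mathcal{L}_2(f)$, so it cannot bridge such a turn, and the chain of overlapping translates breaks exactly there. The paper's fix is the connectedness of the Whitehead graphs at the vertices of $G$ (a consequence of irreducibility, \cite[Section~2]{bfh-lam}), packaged as Remark~\ref{rem:whitehead}: any turn $ab$ of $G$, taken or not, can be joined by a finite chain of turns $a_ib_i\in\mathcal{L}_2(f)$ overlapping consecutively in a full edge. This is the one idea your proposal is missing, and without it the covering of an arbitrary $J$ does not close up.

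On the transfer step that you flag as ``where the real work lies'': in the paper's construction this is essentially free, because the projection $p\colon\tilde G\to T_\Phi$ restricts to an isometry on each $f^k(e)$, in particular on each edge. Hence the edge-overlaps $w_i\cap w_{i+1}$ produced by the Whitehead chain map to non-degenerate arcs of $T_\Phi$, which is exactly condition~(\ref{eq:indecomposableintersection}), and $p(J')\supseteq J$ gives condition~(\ref{eq:indecomposableunion}). So your worry is legitimate but is resolved by a one-line property of the construction of $T_\Phi$ from $d_\infty$, not by a delicate analysis of $\CQ$; the substantive missing ingredient is the Whitehead graph connectivity, not the metric transfer.
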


Before proving this Theorem in Section~\ref{subsec:indecomposable}, we
collect the results we need from \cite{bh-traintrack} and \cite{gjll}.

\subsection{Train-track representative of $\Phi$}

The rose $R_N$ is the graph with one vertex $\ast$ and $N$ edges.  Its
fundamental group $\pi_1(R_N,\ast)$ is naturally identified with the
free group $F_N$. A {\bf marked graph} is a finite graph $G$ with a
homotopy equivalence $\tau: R_N\rightarrow G$.  The marking $\tau$
induces an isomorphism $\tau_\ast:\FN=\pi_1(R_N,\ast)\stackrel{\cong}{\to}\pi_1(G,v_0)$,
where $v_0=\tau(\ast)$.

A homotopy equivalence $f:G\rightarrow G$ defines an outer
automorphism of $F_N$. Indeed, if a path $m$ from $v_0$ to $f(v_0)$
is given, $a\mapsto mf(a)m^{-1}$ induces an automorphism $\phi$ of
$\pi_1(G,v_0)$, and thus of $\FN$ through the marking.  Another path
$m'$ from $v_0$ to $f(v_0)$ gives rise to another automorphism
$\phi'$ of $\FN$ in the same outer class $\Phi$.

A {\bf topological representative} of $\Phi\in\Out(\FN)$ is an
homotopy equivalence $f:G\rightarrow G$ of a marked graph $G$, such
that:
\begin{enumerate}\renewcommand{\labelenumi}{(\roman{enumi})}
\item $f$ maps vertices to vertices,
\item $f$ is locally injective on any edge,
\item $f$ induces $\Phi$ on $F_N\cong \pi_1(G,v_0)$.
\end{enumerate}

Let $e_1,\dots,e_p$ be the edges of $G$ (an orientation is arbitrarily
given on each edge, and $e^{-1}$ denotes the edge $e$ with the reverse
orientation).  The {\bf transition matrix} of the map $f$ is the
$p\times p$ non-negative matrix $M$ with $(i,j)$-entry equal to the
number of times the edge $e_i$ occurs in $f(e_j)$ (we say that a
path (or an edge) $w$ of a graph $G$ \textbf{occurs} in a path $u$ of $G$ if it is
$w$ or its inverse $w\inv$ is a subpath of $u$).

A topological representative  $f:G\rightarrow G$ of $\Phi$ is a
{\bf train-track map} if moreover:
\begin{enumerate}\setcounter{enumi}{3}\renewcommand{\labelenumi}{(\roman{enumi})}
\item for all $k\in\N$, the restriction of $f^k$ on any edge of $G$ is locally injective,
\item any vertex of $G$ has valence at least 3.
\end{enumerate}

According to \cite[Theorem 1.7]{bh-traintrack}, an \iwip\ outer
automorphism $\Phi$ can be represented by a train-track map, with a
primitive transition matrix $M$ (i.e. there exists some $k\in\N$ such
all the entries of $M^k$ are strictly positive). Thus the
Perron-Frobenius Theorem applies. In particular, $M$ has a real
dominant eigenvalue $\lambda>1$ associated to a strictly positive
eigenvector $u=(u_1,\dots,u_p)$.  Indeed, $\lambda$ is the expansion
factor of $\Phi$: $\lambda=\lambda_\Phi$.  We turn the graph $G$ to a
metric space by assigning the length $u_i$ to the edge $e_i$ (for
$i=1,\dots,p$).  Since, with respect to this metric, the length of
$f(e_i)$ is $\lambda$ times the length of $e_i$, we can assume that,
on each edge, $f$ is linear of ratio $\lambda$.

We define the set $\mathcal{L}_2(f)$ of paths $w$ of combinatorial
length 2 (i.e. $w=ee'$, where $e$, $e'$ are edges of $G$, $e\inv\neq
e'$) which occurs in some $f^k(e_i)$ for some
$k\in\N$ and some edge $e_i$ of $G$:
\[
\mathcal{L}_2(f) = \{
ee' : \exists e_i \text{ edge of } G,\;
\exists k\in\N \text{ such that }
ee' \text{ is a subpath of } f^k({e_i}^{\pm 1})
\}.
\]
Since the transition matrix $M$ is primitive,
there exists $k\in\N$ such that for any edge
$e$ of $G$, for any $w\in\mathcal{L}_2(f)$,
$w$ occurs in $f^k(e)$.

Let $v$ be a vertex of $G$. The {\bf Whitehead graph}
$\mathcal{W}_v$ of $v$ is the unoriented graph
defined by:
\begin{itemize}
\item the vertices of $\mathcal{W}_v$ are the
edges of $G$ with $v$ as terminal vertex,
\item there is an edge in $\mathcal{W}_v$ between
$e$ and $e'$ if $e'e^{-1}\in\mathcal{L}_2(f)$.
\end{itemize}
As remarked in \cite[Section~2]{bfh-lam}, if 
$f:G\rightarrow G$ is a train-track representative
of an \iwip\ outer automorphism $\Phi$,
any vertex of $G$ has a connected Whitehead graph.
We summarize the previous discussion in:
\begin{prop}\label{prop:train-track}
Let $\Phi\in\Out(\FN)$ be an iwip outer automorphism.  There exists a
train-track representative $f:G\rightarrow G$ of $\Phi$, with
primitive transition matrix $M$ and connected Whitehead graphs of
vertices.  The edge $e_i$ of $G$ is isometric to the segment
$[0,u_i]$, where $u=(u_1,\dots,u_p)$ is a Perron-Frobenius
eigenvector of $M$. The map $f$ is linear of ratio
$\lambda$ on each edge $e_i$ of $G$.
\end{prop}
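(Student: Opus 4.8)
The plan is to read the statement as a bookkeeping assembly of the ingredients set up in the preceding discussion: two of the four assertions are direct citations, and the other two come from rescaling, so the work is to check that the pieces fit together without interfering with one another.

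First I would invoke \cite[Theorem 1.7]{bh-traintrack} to obtain a train-track representative $f:G\to G$ of $\Phi$ satisfying conditions (i)--(v). Primitivity of the transition matrix $M$ is forced by the hypothesis: full irreducibility makes $M$ and all of its powers irreducible, and a nonnegative integer irreducible matrix all of whose powers are irreducible is aperiodic, hence primitive. One may take $G$ to be a core graph, and a valence-$2$ vertex can be erased by amalgamating its two incident edges; this keeps $f$ a train-track map and keeps the (smaller) transition matrix primitive, so condition (v) holds.

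Next, with $M$ primitive, the Perron--Frobenius theorem supplies the dominant eigenvalue $\lambda>1$ and a strictly positive eigenvector $u=(u_1,\dots,u_p)$. Assigning length $u_i$ to $e_i$ metrizes $G$, and the relation $Mu=\lambda u$ says exactly that the edge-path $f(e_i)$ has total length $\lambda u_i$ in this metric. Because the train-track condition makes $f$ locally injective on $e_i$, I can homotope $f$ rel vertices to the constant-speed parametrization, making it affine of ratio $\lambda$ on each edge. This yields the isometry $e_i\cong[0,u_i]$ and the linearity assertion, and it changes neither $M$ nor the set $\mathcal{L}_2(f)$, hence none of the Whitehead graphs.

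Finally I would prove connectedness of the Whitehead graphs along the lines of \cite[Section~2]{bfh-lam}. If some $\mathcal{W}_v$ were disconnected, the directions at $v$ would split into two families with no turn of $\mathcal{L}_2(f)$ between them; since $\mathcal{L}_2(f)$ is invariant under iterating $f$, this splitting would persist under a power of $\Phi$ and expose a proper free factor system periodic under $\Phi$, contradicting full irreducibility. The hard part is exactly this step: one must verify that the combinatorial disconnection really descends to a genuine $\Phi$-invariant free factor rather than to a pattern that iteration dissolves, and it is primitivity of $M$ --- which guarantees that every turn of $\mathcal{L}_2(f)$ eventually occurs in the image of every edge --- that excludes the spurious case. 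With all four assertions checked, the proposition follows.
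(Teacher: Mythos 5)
Your proposal is correct and follows essentially the same route as the paper, which likewise obtains the representative from \cite[Theorem~1.7]{bh-traintrack}, gets the metric and the linearity from the Perron--Frobenius eigenvector of the primitive matrix $M$, and cites \cite[Section~2]{bfh-lam} for connectedness of the Whitehead graphs; indeed the paper presents the proposition explicitly as a summary of that preceding discussion rather than giving a separate proof. The extra details you supply (deriving primitivity from irreducibility of all powers of $M$, and sketching why a disconnected Whitehead graph would yield an invariant proper free factor) are consistent with the cited sources and do not change the approach.
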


\begin{rem}\label{rem:whitehead}
Let $f:G\rightarrow G$ be a train-track map, 
with primitive transition matrix $M$ and connected
Whitehead graphs of vertices.
Then for any path $w=ab$ in $G$ of combinatorial length 2,
there exist $w_1=a_1b_1,\dots,w_q=a_qb_q\in
\mathcal{L}_2(f)$ ($a,b,a_i,b_i$ edges of $G$)
such that:
\begin{itemize}
\item $a_{i+1}=b_{i}^{-1}$, $i\in\{1,\dots, q-1\}$
\item $a=a_1$ and $b=b_q$.
\end{itemize}
\end{rem}

\subsection{Construction of $T_\Phi$}\label{sec:Tphi}

Let $\Phi\in\Out(\FN)$ be an iwip automorphism,
and let $T_\Phi$ be its attracting tree.
Following \cite{gjll}, we recall 
a concrete construction of the tree $T_\Phi$.

We start with a train-track representative $f:G\rightarrow G$
of $\Phi$ as in Proposition~\ref{prop:train-track}. 
The universal cover $\tilde G$ of $G$ is a simplicial tree,
equipped with a distance $d_0$ obtained by lifting
the distance on $G$. 
The fundamental group $\FN$ acts by deck transformations,
and thus by isometries, on $\tilde G$.
Let $\tilde f$ be a lift of $f$ to $\tilde G$.
This lift $\tilde f$ is associated to a unique automorphism $\phi$ in the outer
class $\Phi$, characterized by 
\begin{equation}\label{eq:phi-f}
\forall u\in\FN, \forall x\in\tilde G,\quad \phi(u)\tilde f(x)=\tilde f(ux).
\end{equation}

For $x,y\in\tilde G$ and $k\in\N$, we define:
\[
d_k(x,y)=\frac{d_0(\tilde f^k(x),\tilde f^k(y))}{\lambda^k}.
\]
The sequence of distances $d_k$ is decreasing and converges to a
pseudo-distance $d_\infty$ on $\tilde G$.  Identifying points $x,y$ in
$\tilde G$ which have distance $d_\infty(x,y)$ equal to $0$, we obtain
the tree $T_\Phi$.  The free group $\FN$ still acts by isometries on
$T_\Phi$.  The quotient map $p:\tilde G\rightarrow T_\Phi$ is
$\FN$-equivariant and 1-Lipschitz.  Moreover, for any edge $e$ of
$\tilde G$, for any $k\in\N$, the restriction of $p$ to $f^k(e)$ is an
isometry. Through $p$ the map $\tilde f$ factors to a homothety $H$
of $T_\Phi$, of ratio $\lambda_\Phi$:
\[
\forall x\in\tilde G,\quad H(p(x))=p(\tilde f(x)).
\]  
Property
(\ref{eq:phi-f}) leads to 
\begin{equation}\label{eq:H}
\forall u\in\FN, \forall x\in T_\Phi, \quad \phi(u)H(x)=H(ux).
\end{equation}

\subsection{Indecomposability of $T_\Phi$}
\label{subsec:indecomposable}

We say that a path (or an edge) $w$ of the graph $G$ occurs in a path
$u$ of the universal cover $\tilde G$ of $G$ if $w$ has a lift $\tilde
w$ which occurs in $u$.

\begin{lem}\label{lem:Ilift}
Let $I$ be a non degenerate arc in $T_\Phi$.
There exists an arc $I'$ in $\tilde G$ and an integer
$k$ such that
\begin{itemize}
\item $p(I')\subseteq I$
\item any element of $\mathcal{L}_2(f)$ occurs in
$H^k(I')$.
\end{itemize}
\end{lem}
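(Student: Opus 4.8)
The plan is to start with an arbitrary non-degenerate arc $I$ in $T_\Phi$ and pull it back to the simplicial tree $\tilde G$ using the quotient map $p$. Since $p$ is $1$-Lipschitz and surjective, and since $I$ is non-degenerate, I expect to find a non-degenerate sub-arc $I_0 \subseteq I$ whose interior avoids the (at most countable) set of images of vertices, so that $I_0$ lies in the isometric image of a single edge. More precisely, recall from the construction in Section~\ref{sec:Tphi} that for any edge $e$ of $\tilde G$ and any $k$, the restriction of $p$ to $f^k(e)$ is an isometry; since $\tilde G = \bigcup_e f^0(e)$ is covered by edges and $p$ maps each edge isometrically onto its image, the arc $I$ (which has positive length) must meet the $p$-image of some edge in a non-degenerate sub-arc. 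Choosing a lift of that intersection gives an arc $I'$ in $\tilde G$ with $p(I') \subseteq I$ and $I'$ contained in (a sub-arc of) a single edge of $\tilde G$.

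Next I would exploit the expansion property of the train-track map. The homothety $H$ satisfies $H(p(x)) = p(\tilde f(x))$ and multiplies lengths by $\lambda_\Phi > 1$, so iterating $H$ stretches $p(I')$; equivalently, $\tilde f^k$ applied to the underlying edge path of $I'$ produces longer and longer reduced edge paths in $\tilde G$, because $f$ is a train-track map (condition (iv): $f^k$ is locally injective on edges, so no cancellation occurs). The combinatorial length of $\tilde f^k(I')$ grows like $\lambda^k$. The key point is that I want \emph{every} element of $\mathcal{L}_2(f)$ to occur in $H^k(I')$, which combinatorially means every legal turn of length $2$ appears as a subpath of the iterated edge path $\tilde f^k(I')$.

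The main engine here is the primitivity of the transition matrix $M$ together with the connectivity of the Whitehead graphs, as recorded in Proposition~\ref{prop:train-track} and Remark~\ref{rem:whitehead}. Primitivity gives, as noted just before the Whitehead graph discussion, an integer $k_0$ such that for any edge $e$ and any $w \in \mathcal{L}_2(f)$, the path $w$ occurs in $f^{k_0}(e)$. Since $I'$ contains (after passing to a slightly smaller sub-arc if necessary) at least one full edge $e$ of $\tilde G$ in its interior, the path $\tilde f^{k_0}(e)$ is a subpath of $\tilde f^{k_0}(I')$, and hence every $w \in \mathcal{L}_2(f)$ occurs in $\tilde f^{k_0}(I')$. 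Translating back through $p$ and the relation $H^{k_0} \circ p = p \circ \tilde f^{k_0}$, and using that $p$ restricted to each $f^{k_0}(e)$ is an isometry so that occurrences of length-$2$ subpaths are preserved, I conclude that every element of $\mathcal{L}_2(f)$ occurs in $H^{k_0}(I')$, giving the desired $k = k_0$.

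The step I expect to be the main obstacle is the very first one: guaranteeing that the pulled-back arc $I'$ actually contains a complete edge of $\tilde G$ in its interior, rather than merely a fragment of one edge. A short arc $I$ might pull back to something strictly inside a single edge, which contains no length-$2$ subpath at all and to which the ``$w$ occurs in $f^{k_0}(e)$'' estimate cannot be directly applied. The way around this is to first apply the homothety: since $H$ expands by $\lambda_\Phi > 1$, for large enough $m$ the arc $H^m(p(I'))$ has length exceeding the maximal edge length, so its lift $\tilde f^m(I')$ in $\tilde G$ necessarily contains a full edge. One then applies the primitivity estimate to that edge and absorbs the extra exponent $m$ into the final $k$. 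Care is needed to keep all arcs non-degenerate and to verify that $p$ remains isometric on the relevant iterated edges so that combinatorial occurrences transfer faithfully to $T_\Phi$.
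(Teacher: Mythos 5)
Your proposal is correct and follows essentially the same route as the paper: restrict $I$ to the $p$-image of a single edge, lift to an arc $I'$ inside that edge, apply $\tilde f^{k_1}$ (equivalently $H^{k_1}$) until the image is long enough to contain a full edge $e'$, and then invoke primitivity of the transition matrix to find $k_2$ with every element of $\mathcal{L}_2(f)$ occurring in $f^{k_2}(e')$, taking $k=k_1+k_2$. The only quibble is quantitative: to guarantee a full edge inside the expanded arc you need its length to exceed \emph{twice} the maximal edge length (the paper's constant $L$), not just the maximal edge length, since both end segments may be partial edges.
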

\begin{proof}
Let $I\subset T_\Phi$ be a non-degenerate arc.
There exists an edge $e$ of $\tilde G$ such that
$I_0=p(e)\cap I$ is a non-degenerate arc: $I_0=[x,y]$.
We choose $k_1\in\N$ such that 
$d_\infty(H^{k_1}(x), H^{k_1}(y)) > L$
where
\[
L=2\max\{u_i=|e_i| \ |\ e_i \text{ edge of } G \}.
\]
Let $x',y'$ be the points in $e$ such that $p(x')=x$, $p(y')=y$, and
let $I'$ be the arc $[x',y']$.  Since $p$ maps $f^{k_1}(e)$
isometrically into $T_\Phi$, we obtain that
$d_0(f^{k_1}(x'),f^{k_1}(y')) \geq L$ Hence there exists an edge $e'$
of $\tilde G$ contained in $[f^{k_1}(x'),f^{k_1}(y')]$.  Moreover, for
any $k_2\in\N$, the path $f^{k_2}(e')$ isometrically injects in
$[H^{k_1+k_2}(x),H^{k_1+k_2}(y)]$.  We take $k_2$ big enough so that
any path in $\mathcal{L}_2(f)$ occurs in $f^{k_2}(e')$.  Then
$k=k_1+k_2$ is suitable.
\end{proof}

\begin{proof}[Proof of Theorem~\ref{thm:iwipindec}]  Let $I,J$ be two
non-trivial arcs in $T_\Phi$.  We have to prove that $I$ and $J$
satisfy properties~(\ref{eq:indecomposableunion}) and
(\ref{eq:indecomposableintersection}).  Since $H$ is a homeomorphism,
and because of (\ref{eq:H}), we can replace $I$ and $J$ by $H^k(I)$
and $H^k(J)$, accordingly, for some $k\in\N$.

We consider an arc $I'$ in $\tilde G$ and an integer $k\in\N$ as given
by Lemma \ref{lem:Ilift}.  Let $x,y$ be the endpoints of the arc
$H^k(J)$: $H^k(J)=[x,y]$.  Let $x',y'$ be points in $\tilde G$ such
that $p(x')=x$, $p(y')=y$, and let $J'$ be the arc $[x',y']$.
According to Remark \ref{rem:whitehead}, there exist $w_1,\dots,w_n$
such that:
\begin{itemize}
\item $w_i$ is a lift of some path in $\mathcal{L}_2(f)$,
\item $J'\subseteq \bigcup_{i=1}^n w_i$, 
\item $w_i\cap w_{i+1}$ is an edge.
\end{itemize}
Since Lemma \ref{lem:Ilift} ensures that any element of
$\mathcal{L}_2(f)$ occurs in $H^k(I')$, we deduce that $H^k(I)$ and
$H^k(J)$ satisfy properties~(\ref{eq:indecomposableunion}) and
(\ref{eq:indecomposableintersection}),
concluding the proof of Theorem~\ref{thm:iwipindec}. 
\end{proof}

\section{Index of an outer automorphism}\label{sec:autoindex}

An automorphism $\phi$ of the free group $\FN$ extends to a
homeomorphism $\partial\phi$ of the boundary at infinity
$\partial\FN$. We denote by $\Fix(\phi)$ the fixed subgroup of
$\phi$. It is a finitely generated subgroup of $\FN$ and thus
its boundary $\partial\Fix(\phi)$ naturally embeds in
$\partial\FN$. Elements of $\partial\Fix(\phi)$ are fixed by
$\partial\phi$ and they are called \textbf{singular}.
Non-singular fixed points of $\partial\phi$ are called
\textbf{regular}. A fixed point $X$ of $\partial\phi$ is
\textbf{attracting} (resp. \textbf{repelling}) if it is regular
and if there exists an element $u$ in $\FN$ such that
$\phi^n(u)$ (resp. $\phi^{-n}(u)$) converges to $X$.
The set of fixed points of $\partial\phi$ is denoted by 
$\Fix(\partial\phi)$.

Following Nielsen, fixed points of $\partial\phi$ have been
classified by Gaboriau, Jaeger, Levitt and, Lustig:
\begin{prop}[{\cite[Proposition~1.1]{gjll}}] 
  Let $\phi$ be an automorphism of the free group $\FN$. Let $X$ be a
  fixed point of $\partial\phi$. Then exactly one of the following
  occurs:
\begin{enumerate} 
\item $X$ is in the boundary of the fixed
subgroup of $\phi$; 
\item $X$ is attracting; 
\item $X$ is repelling.\qed
\end{enumerate}
\end{prop}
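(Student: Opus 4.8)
The plan is to read off the trichotomy from the dynamics of $\partial\phi$ along the geodesic ray determined by $X$, with the bounded cancellation lemma as the essential tool. Realize $\partial\FN$ as the set of infinite reduced words and write $X=x_1x_2x_3\cdots$. The hypothesis $\partial\phi(X)=X$ means precisely that the reduced form of $\phi(x_1\cdots x_n)$ shares with $X$ a common prefix whose length $p(n)$ tends to infinity. Cooper's bounded cancellation lemma furnishes a constant $C=C(\phi)$ such that whenever $w=uv$ is reduced, the reduced form of $\phi(u)\phi(v)$ is obtained by cancelling at most $C$ letters at the junction; equivalently, the $\phi$-image of any geodesic of the Cayley graph stays within the $C$-neighbourhood of the geodesic joining the images of its endpoints. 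The same holds for $\phi\inv$ with a constant $C'$.

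First I would introduce the \emph{advance function} $a(n)=p(n)-n$, measuring how much further along the ray the $\phi$-image of the prefix $x_1\cdots x_n$ reaches than the prefix itself. The point of bounded cancellation is that the advance is \emph{self-reinforcing}: if the image of some prefix overshoots by more than the cancellation threshold, then applying $\phi$ to the correspondingly longer prefix overshoots by strictly more, and symmetrically for undershooting. Consequently $a(n)$ cannot oscillate, and its asymptotic behaviour falls into exactly one of three regimes: $a$ stays bounded, $a(n)\to+\infty$, or $a(n)\to-\infty$. Moreover the regime is an invariant of the germ of $\partial\phi$ at $X$ rather than of the particular prefix chosen, since bounded cancellation forces all prefixes of $X$ to be moved coherently.

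Next I would match each regime with one of the three conclusions.
\begin{itemize}
\item If $a(n)\to+\infty$, pick a prefix $u=x_1\cdots x_m$ whose image already advances beyond the cancellation threshold; iterating $\phi$ then yields words $\phi^k(u)$ whose common prefix with $X$ grows without bound, so $\phi^k(u)\to X$ and $X$ is attracting (and regular, else $a$ would stay bounded).
\item If $a(n)\to-\infty$, this is the same argument applied to $\phi\inv$ (which also fixes $X$): here $\phi^{-k}(u)\to X$, so $X$ is repelling.
\item If $a(n)$ stays bounded, then $\phi$ fixes arbitrarily long prefixes of $X$ up to bounded distance; a compactness argument together with the finite generation of $\Fix(\phi)$ lets one promote this to genuine elements of $\Fix(\phi)$ accumulating to $X$, whence $X\in\partial\Fix(\phi)$ is singular. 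Conversely a point of $\partial\Fix(\phi)$ is a limit of honestly fixed elements and so forces $a$ to remain bounded, so the bounded regime is exactly the singular case.
\end{itemize}

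Finally, mutual exclusivity follows at once from this dictionary: the singular case excludes the other two by definition, since attracting and repelling require $X$ to be regular, and the attracting and repelling cases correspond to the two opposite signs of the eventual advance, which by the second paragraph is a well-defined invariant. I expect the main obstacle to be exactly this second paragraph, namely making the self-reinforcement of the advance rigorous through bounded cancellation — ruling out oscillation and showing that positive advance at one scale forces growing advance under iteration. This is also where the dividing line between the regular and singular cases truly lives; the remaining delicate step is the promotion of a bounded advance to an actual element of $\Fix(\phi)$, where the finite generation of the fixed subgroup is used.
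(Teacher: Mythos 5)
The paper does not prove this statement: it is imported verbatim from \cite[Proposition~1.1]{gjll}, so the only meaningful comparison is with the argument there, which does indeed run along the lines you propose (write $X=x_1x_2\cdots$ as an infinite reduced word, fix a bounded cancellation constant $C$ for $\phi$, and compare $\phi(u_n)$ with $u_n$ along the ray to $X$, where $u_n=x_1\cdots x_n$). Your overall architecture --- three regimes for the advance $a(n)=p(n)-n$, matched with the three alternatives --- is the right one, and your treatment of the regime $a(n)\to+\infty$ is essentially correct once that regime is established.

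The genuine gap is the ``self-reinforcement'' mechanism of your second paragraph, which you yourself flag as the crux; as stated I do not believe it is provable, and it is not what makes the trichotomy work. Bounded cancellation first gives (using $p(n)\to\infty$) that $\phi(u_n)=u_{p(n)}z_n$ in reduced form with $|z_n|\le C$; but then applying $\phi$ to the longer prefix $u_{p(n)}=u_n\cdot(x_{n+1}\cdots x_{p(n)})$ and cancelling at most $C$ letters at the junction only yields $p(p(n))\ge p(n)-C$. A single overshoot beyond the threshold does \emph{not} propagate to a larger one, and the advance is not monotone. What actually closes the argument is: (i) $|a(n+1)-a(n)|$ is bounded (by $C+\max_i|\phi(x_i)|+1$), so if $|a(n)|\to\infty$ its sign is eventually constant; and (ii) if instead $\liminf_n|a(n)|<\infty$, then the elements $u_n\inv\phi(u_n)$, of reduced length $|a(n)|+|z_n|$, take a single value $v$ for infinitely many indices $n_i$, whence $\phi(u_{n_i}u_{n_j}\inv)=u_{n_i}vv\inv u_{n_j}\inv=u_{n_i}u_{n_j}\inv\in\Fix(\phi)$ and these elements converge to $X$. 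So the correct case division is ``$\liminf|a|<\infty$'' (which already forces $X\in\partial\Fix(\phi)$) versus ``$a\to+\infty$'' versus ``$a\to-\infty$''; your appeal to compactness and to finite generation of $\Fix(\phi)$ in the bounded case is a red herring --- the pigeonhole above is all that is needed. Finally, mutual exclusivity of (2) and (3) does not ``follow at once'': you have only proved the implications ($a\to+\infty$ implies attracting) and ($a\to-\infty$ implies repelling), and to exclude a point being both you must show, say, that $a(n)\to-\infty$ precludes $\phi^k(u)\to X$ for \emph{every} $u$; this is the content of the lemma in \cite{gjll} that an attracting fixed point attracts a whole neighbourhood, and it requires a separate (if short) argument.
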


We denote by $\Att(\phi)$ the set of attracting fixed points of
$\partial\phi$. The fixed subgroup $\Fix(\phi)$ acts on the set
$\Att(\phi)$ of attracting fixed points.

In \cite{gjll} the following \textbf{index} of the automorphism
$\phi$ is defined:
\[
\indaut(\phi)=\frac{1}{2}\#(\Att(\phi)/\Fix(\phi))+\rank(\Fix(\phi))-1
\]
If $\phi$ has a trivial fixed subgroup, the above definition
is simpler:
\[
\indaut(\phi)=\frac 12\#\Att(\phi)-1.
\]

Let $u$ be an element of $\FN$ and let $i_u$ be the
corresponding inner automorphism of $\FN$:
\[
\forall w\in\FN,
i_u(w)=uwu\inv.
\]
The inner automorphism $i_u$ extends to the boundary of $\FN$ as left
multiplication by $u$:
\[
\forall
X\in\partial\FN, \partial i_u(X)=uX.
\]
The group $\Inn(\FN)$ of
inner automorphisms of $\FN$ acts by conjugacy on the
automorphisms in an outer class $\Phi$. Following Nielsen, two
automorphisms, $\phi,\phi'\in\Phi$ are \textbf{isogredient} if
they are conjugated by some inner automorphism $i_u$:
\[
\phi'=i_u\circ\phi\circ i_{u\inv}=i_{u\phi(u)\inv}\circ\phi.
\]
In this case, the actions of $\partial\phi$ and $\partial\phi'$
on $\partial\FN$ are conjugate by the left multiplication by
$u$. In particular, a fixed point $X'$ of $\partial\phi'$ is a
translate $X'=uX$ of a fixed point $X$ of $\partial\phi$. Two
isogredient automorphisms have the same index: this is the
index of the isogrediency class.
An isogrediency class $[\phi]$ is \textbf{essential} if it has positive
index: $\ind([\phi])>0$. We note that essential isogrediency
classes are principal in the sense of \cite{fh-recognition}, but the
converse is not true.

The \textbf{index} of the outer automorphism $\Phi$ is the sum,
over all essential isogrediency classes of automorphisms $\phi$ in the
outer class $\Phi$, of their indices, or alternatively:
\[
\indaut(\Phi)=\sum_{[\phi]\in\Phi/\mbox{\scriptsize
Inn}(\FN)}\max(0;\indaut(\phi)).
\]

We adapt the notion of {\em forward rotationless outer automorphism}
of Feighn and Handel \cite{fh-recognition} to our purpose.  We denote
by $\Per(\phi)$ the set of elements of $\FN$ fixed by some positive
power of $\phi$:
\[
\Per(\phi)=\bigcup_{n\in\N^*}\Fix(\phi^n);
\]
and by $\Per(\partial\phi)$ the set of elements of $\partial\FN$ fixed
by some positive power of $\partial\phi$:
\[
\Per(\partial\phi)=\bigcup_{n\in\N^*}\Fix(\partial\phi^n).
\]

\begin{defn}
An outer automorphism $\Phi\in\Out(\FN)$ is \FR\ if:
\begin{enumerate}[(FR1)]
\item \label{fr1} for any automorphism $\phi\in\Phi$, $\Per(\phi)=\Fix(\phi)$ 
and $\Per(\partial\phi)=\Fix(\partial\phi)$;
\item \label{fr2} if $\psi$ is an automorphism
in the outer class $\Phi^n$ for some $n>0$, with
$\indaut(\psi)$ postive, then there exists an automorphism $\phi$ in
$\Phi$ such that $\psi=\phi^n$. 
\end{enumerate}
\end{defn}

\begin{prop}\label{prop:rotationless}
Let $\Phi\in\Out(\FN)$.
There exists $k\in\N^*$ such that $\Phi^k$ is \FR.
\end{prop}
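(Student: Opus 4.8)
The plan is to show that every kind of \emph{strict periodicity} forbidden by (FR1) and (FR2) occurs only finitely often and with uniformly bounded period, and then to take $k$ to be a common multiple of all these periods, chosen inside a suitable ``stable range'' of exponents; this is the free-group analogue of the Feighn--Handel existence theorem for rotationless powers \cite{fh-recognition}. Two inputs drive the argument. First, I would fix a relative train-track representative $f\colon G\to G$ of $\Phi$ in the sense of \cite{bh-traintrack}: this packages the periodic data of $\Phi$ into finite combinatorial data, namely the periodic directions at the finitely many vertices, the finitely many periodic indivisible Nielsen paths, and the finitely many vertices/lifts carrying positive index. Each such object has period dividing a single constant $p$ determined by $f$ (a divisor of the order of the permutation $f$ induces on directions and Nielsen paths). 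Second, the index bound of \cite{gjll} bounds the number of essential isogrediency classes and forces $\indaut$ to stabilize under powers.

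For (FR1) I would choose $k$ divisible by $p$, so that $f^k$ fixes every periodic direction and every periodic Nielsen path, and then argue for an arbitrary $\varphi\in\Phi^k$. On the one hand, a periodic element $w\in\Per(\varphi)$ produces a periodic (hence now fixed) Nielsen path or vertex, so $w\in\Fix(\varphi)$; equivalently the ascending chain $\Fix(\varphi)\subseteq\Fix(\varphi^2)\subseteq\cdots$ stabilizes at its first term (its rank being bounded by $N$), giving $\Per(\varphi)=\Fix(\varphi)$. On the other hand, a periodic point $X\in\Per(\partial\varphi)$ is, by the classification of \cite{gjll} recalled above, either in $\partial\Fix(\varphi)$ (covered by the previous sentence) or attracting/repelling, i.e.\ the endpoint of an eigenray at a principal vertex realizing a periodic direction now fixed; in either case $X\in\Fix(\partial\varphi)$. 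Because these combinatorial objects lift equivariantly to $\tilde G$, the single exponent $k$ works simultaneously for every automorphism $\varphi$ in the outer class, not only the essential ones.

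For (FR2) I would first observe that $\psi\in\Phi^n$ is an $n$-th power of an element of $\Phi$ exactly when it is isogredient to one, since $i_u\varphi^n i_u\inv=(i_u\varphi i_u\inv)^n$. Thus (FR2) is equivalent to the statement that every essential isogrediency class of $\Phi^n$ lies in the image of the power map $[\varphi]\mapsto[\varphi^n]$ on isogrediency classes. By \cite{gjll}, together with the inclusions $\Fix(\varphi)\subseteq\Fix(\varphi^n)$ and $\Att(\varphi)\subseteq\Att(\varphi^n)$, the index $\indaut(\Phi^{j})$ is nondecreasing along a divisibility chain of exponents and is bounded (by $N-1$), hence eventually constant; I would take $k$ inside this stable range and divisible by $p$. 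In the stable range the power map neither creates new essential classes nor merges existing ones, so it restricts to a bijection on essential isogrediency classes for every $n$, and its surjectivity is precisely (FR2).

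The main obstacle is this bijectivity claim underlying (FR2): one must rule out that passing to a power either creates a new essential isogrediency class or merges two of them, and this is exactly where the finiteness and monotonicity of the \cite{gjll} index are indispensable, since either phenomenon would force a strict change of $\indaut$, impossible once it has stabilized. A secondary technical point, already flagged in (FR1), is to guarantee that the same $k$ works for \emph{every} automorphism in the outer class at once; I expect this to require transporting the periodic data of $f$ through all lifts to $\tilde G$, combined with the stabilization of the fixed-subgroup chain furnished by the rank bound $\rank\Fix\le N$.
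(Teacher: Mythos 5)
Your proposal sets out to reprove, via train tracks, the two deep inputs that the paper simply cites: property (FR1) is exactly \cite[Theorem~1]{ll-periodic-ends}, and property (FR2) is extracted from the forward-rotationless machinery of \cite{fh-recognition} (positive index implies principal in the sense of \cite[Definition~3.1]{fh-recognition}, and \cite[Lemma~4.43]{fh-recognition} supplies a forward rotationless power). Re-deriving these results is a much larger undertaking than your sketch allows, and two of your steps do not hold as stated. First, for (FR1) you claim that the chain $\Fix(\phi)\subseteq\Fix(\phi^2)\subseteq\cdots$ ``stabilizes at its first term (its rank being bounded by $N$)''. Bounded rank only forces the \emph{ranks} to stabilize, not the subgroups: $\langle a^2\rangle\subsetneq\langle a\rangle$ is an ascending pair of rank-one subgroups of a free group. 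Producing a single exponent $k$ such that $\Per(\phi)=\Fix(\phi)$ and $\Per(\partial\phi)=\Fix(\partial\phi)$ for \emph{every} automorphism $\phi$ in the outer class is precisely the content of the Levitt--Lustig theorem; the finite combinatorial data of one topological representative $f$ (periodic directions, periodic Nielsen paths) does not obviously control the periodic data of the infinitely many lifts and isogrediency classes, and your text flags this (``transporting the periodic data of $f$ through all lifts'') without supplying the argument.

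Second, for (FR2) your reduction to surjectivity of the power map on essential isogrediency classes is fine (since $i_u\circ\phi^n\circ i_{u\inv}=(i_u\circ\phi\circ i_{u\inv})^n$), but the key claim --- that once $\indaut(\Phi^j)$ has stabilized the power map ``neither creates new essential classes nor merges existing ones,'' hence is a bijection --- is not justified by index bookkeeping alone. A merger of two essential classes accompanied by the appearance of a new essential class could leave the total index unchanged; to exclude such cancellation you would need classwise monotonicity $\indaut(\phi)\le\indaut(\phi^n)$ \emph{together with} a strict accounting of what happens to $\Att$ and $\Fix$ when classes merge, neither of which you establish. That accounting is exactly what \cite[Lemma~4.43]{fh-recognition}, combined with the index estimates of \cite{gjll}, accomplishes. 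The efficient repair is the paper's own two-line proof: invoke \cite[Theorem~1]{ll-periodic-ends} to get a power satisfying (FR1), then invoke \cite[Lemma~4.43]{fh-recognition} to pass to a further power that is forward rotationless and hence satisfies (FR2).
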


\begin{proof}
By \cite[ Theorem~1]{ll-periodic-ends} there exists a power
$\Phi^k$ with (FR\ref{fr1}).  An automorphism $\phi\in\Aut(\FN)$ with
positive index $\ind(\phi)>0$ is principal in the sense of
\cite[Definition 3.1]{fh-recognition}.  Thus our property
(FR\ref{fr2}) is a consequence of the forward rotationless property of
\cite[Definition 3.13]{fh-recognition}. By \cite[Lemma
4.43]{fh-recognition} there exists a power $\Phi^{k\ell}$ which is
forward rotationless and thus which satisfies (FR\ref{fr2}).
\end{proof}

\section{Indices}

\subsection{Botany of trees}\label{sec:botanictrees}

We recall in this Section the classification of trees in the boundary
of outer space of our paper~\cite{ch-a}.

Gaboriau and Levitt~\cite{gl-rank} introduced an index for a tree $T$
in $\barCVN$ , we call it the \textbf{geometric index} and denote it by
$\igeo(T)$. It is defined using the valence of the branch points, of
the $\R$-tree $T$, with an action of the free group by isometries:
\[ 
\igeo(T)=\sum_{[P]\in T/\FN}\igeo(P). 
\]
where the local index of a point $P$ in $T$ is
\[
\igeo(P)=\#(\pi_0(T\smallsetminus\{P\})/\Stab(P))+2\,\rank(\Stab(P))-2.
\]
Gaboriau and Levitt \cite{gl-rank} proved that the geometric index of
a geometric tree is equal to $2N-2$ and that for any tree in the
compactification of outer space $\barCVN$ the geometric index is
bounded above by $2N-2$. Moreover, they proved that the trees in
$\barCVN$ with geometric index equal to $2N-2$ are precisely
the geometric trees.

If, moreover, $T$ has dense orbits, Levitt and Lustig \cite{ll-north-south, ll-periodic}
defined the map $\CQ:\partial\FN\to\hat T$ which is characterized
by

\begin{prop}\label{prop:Qexists} 
  Let $T$ be an $\R$-tree in $\barCVN$ with dense orbits.  There
  exists a unique map $\CQ:\partial\FN\to\hat T$ such that for any
  sequence $(u_n)_{n\in\N}$ of elements of $\FN$ which converges to
    $X\in\partial\FN$, and any point $P\in T$, if the sequence of
    points $(u_nP)_{n\in\N}$ converges to a point $Q\in\hat T$, then
    $\CQ(X)=Q$. Moreover, $\CQ$ is onto.
\end{prop}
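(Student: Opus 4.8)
The plan is to construct $\CQ$ as a limit of orbit maps and to concentrate the whole difficulty in showing that this limit is independent of the choices made. Uniqueness is formal: if two maps satisfy the stated property, then for each $X$ they are forced to agree on the common value determined by any subsequential limit of $w_nP$, so it suffices to exhibit $\CQ$ and verify the property. Fix a basis of $\FN$ and a basepoint $P\in T$. For $X\in\partial\FN$ write $w_n$ for its reduced prefix of length $n$, so that $w_n\to X$; the candidate definition is $\CQ(X)=\lim_n w_nP$, the limit taken in $\hat T$ with the observers' topology, in which $\hat T$ is compact. Thus subsequential limits always exist, and the real content is that the full sequence converges and that the limit is insensitive to $P$ and to the approximating sequence.

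The quantitative engine will be bounded backtracking. The orbit map extends to an $\FN$-equivariant Lipschitz map $\pi$ from the Cayley tree of $\FN$ to $T$, and since the action on $T$ is very small this map has a bounded backtracking constant $C$: the image of any geodesic lies in the $C$-neighborhood of the geodesic between its endpoints. Applying this to the reduced factorization $w_n=w_m v$ for $m<n$ yields the Gromov product estimate $(w_mP\mid w_nP)_P\ge d(P,w_mP)-2C$ based at $P$. Consequently, if $d(P,w_nP)\to\infty$, the points $w_nP$ form a Gromov sequence and converge to a well-defined end of $T$, i.e. a point of $\partial T$; moreover any sequence $u_n\to X$ factors as $u_n=w_{k_n}v_n$ with $k_n\to\infty$, and the same estimate gives $(w_{k_n}P\mid u_nP)_P\to\infty$, so $u_nP$ converges to the same end. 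Basepoint independence is immediate in this case because $d(w_nP,w_nP')=d(P,P')$ stays bounded.

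The delicate case, which I expect to be the main obstacle, is when $d(P,w_nP)$ remains bounded, so that $\CQ(X)$ lies in the finite part $\bar T$. The bounded backtracking estimate only locates the relevant branch points up to the additive error $2C$, which is not enough to force Cauchy convergence in the metric; this is precisely why I would argue in the observers' topology rather than the metric one. Concretely, I would show that for every observer $x\in T$ the side of $x$ on which $w_nP$ eventually lies stabilizes, by tracking the sequence of directions determined by the $w_nP$ and using that consecutive translates satisfy $w_{n+1}P=w_n(x_{n+1}P)$, so that $[w_nP,w_{n+1}P]=w_n[P,x_{n+1}P]$ has bounded length; the nested family of half-trees so obtained intersects in the single point $\CQ(X)$. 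The same direction-stabilization argument, applied to a second basepoint $P'$ and to a general sequence $u_n=w_{k_n}v_n$, delivers independence of the basepoint and of the approximating sequence, hence the characterizing property. This is the technical heart of the statement.

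Finally, surjectivity follows from density of orbits together with compactness of $\FN\cup\partial\FN$. Every point of $\bar T$ is a limit of orbit points $g_mP$ because $\FN\cdot P$ is dense, and every point of $\partial T$ is an end reachable by orbit points escaping to infinity after a diagonal approximation; in either situation one may arrange $|g_m|\to\infty$, pass to a subsequence so that $g_m$ converges to some $X$, necessarily in $\partial\FN$ since $|g_m|\to\infty$, and then the already established characterizing property gives $\CQ(X)=\lim_m g_mP=Q$. Hence $\CQ$ is onto $\hat T$.
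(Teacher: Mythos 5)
This proposition is not proved in the paper: it is quoted from Levitt--Lustig \cite{ll-north-south,ll-periodic}, so your attempt has to be measured against their argument. Your skeleton agrees with theirs in two places: bounded backtracking for the orbit map $u\mapsto uP$ (valid for any tree in $\barCVN$) together with the Gromov-product estimate does handle the case where $d(P,w_nP)\to\infty$, including independence of the basepoint and of the approximating sequence; and your surjectivity argument (dense orbits, finiteness of balls in $\FN$, compactness of $\FN\cup\partial\FN$) is correct once the characterizing property is in hand.

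The genuine gap is exactly where you flag it, and the mechanism you propose for closing it does not work. In the bounded case you want to show that, for each observer $x\in T$, the points $w_nP$ eventually stay on one side of $x$, and your only tools are the bounded-backtracking constant and the fact that consecutive translates $w_nP$, $w_{n+1}P$ are a bounded distance apart. That is not enough: bounded jumps are perfectly compatible with the sequence straddling $x$ infinitely often, and in fact the obvious attempt to derive a contradiction only shows that the orbit of $x$ meets a fixed compact subtree infinitely often --- which is precisely what \emph{dense orbits} guarantees, not what it forbids. Note that your convergence argument never invokes the dense-orbits hypothesis at all (you use it only for surjectivity), whereas the statement is false without it: for a very small action that is not with dense orbits, different sequences $u_n\to X$ can accumulate on genuinely different points, so no map with the stated characterizing property exists. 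The missing ingredient in the Levitt--Lustig proof is the quantitative consequence of dense orbits that a tree in $\barCVN$ with dense orbits is a strong limit of free simplicial actions of covolume tending to $0$; this lets one choose, for every $\epsilon>0$, a basepoint and a basis for which both the backtracking constant and the displacements of the generators are less than $\epsilon$, and only then does the Gromov-product estimate force the tail of the orbit sequence into a set of diameter $O(\epsilon)$. Without that (or an equivalent use of dense orbits), the well-definedness of $\CQ(X)$ in the bounded case, and hence the uniqueness statement, remains unproved.
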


Let us consider the case of a tree $T$ dual to a measured foliation
$(\mathcal{F},\mu)$ on a hyperbolic surface $S$ with boundary ($T$ is
a surface tree). Let $\tilde{\mathcal{F}}$ be the lift of
$\mathcal{F}$ to the universal cover $\tilde S$ of $S$. The boundary
at infinity of $\tilde S$ is homeomorphic to $\partial\FN$. On the one
hand, a leaf $\ell$ of $\tilde{\mathcal{F}}$ defines a point in
$T$. On the other hand, the ends of $\ell$ define points in
$\partial\FN$. The map $\CQ$ precisely sends the ends of $\ell$ to the
point in $T$. The Poincaré-Lefschetz index of the foliation
$\mathcal{F}$ can be computed from the cardinal of the fibers of the
map $\CQ$. This leads to the following definition of the $\CQ$-index
of an $\R$-tree $T$ in a more general context.

Let $T$ be an $\R$-tree in $\barCVN$ with dense orbits.  The
$\CQ$-index of the tree $T$ is defined as follows:
\[ 
\iq(T)=\sum_{[P]\in \hat T/\FN}\max(0;\iq(P)). 
\] 
where the local index of a point $P$ in $T$ is:
\[ 
\iq(P)=\#(\CQ_r\inv(P)/\Stab(P))+2\,\rank(\Stab(P))-2 
\]
with $\CQ_r\inv(P)=\CQ\inv(P)\ssm\partial\Stab(P)$ the regular fiber
of $P$.

Levitt and Lustig~\cite{ll-north-south} proved that points in
$\partial T$ have exactly one pre-image by $\CQ$.  Thus, only points
in $\bar T$ contribute to the $\CQ$-index of $T$.

We proved \cite{ch-a} that the $\CQ$-index of an $\R$-tree in the
boundary of outer space with dense orbits is bounded above by
$2N-2$. And it is equal to $2N-2$ if and only if it is of surface
type.

Our botanical classification \cite{ch-a} of a tree $T$ with a
minimal very small indecomposable action of $\FN$ by isometries is as
follows

\begin{center}
\tabcolsep=.2em \noindent
\begin{tabular}{|c|c||c|c|} 
\cline{3-4}
\multicolumn{2}{c||}{} & \textbf{geometric} & \textbf{not geometric}\\ 
\cline{3-4}
\multicolumn{2}{c||}{} &
	\begin{tabular}{c}	
	$\igeo(T)=2N-2$
	\end{tabular} & 
		\begin{tabular}{c}
		$\igeo(T)<2N-2$
		\end{tabular}\\
\hline\hline
\textbf{Surface type}& 
		\begin{tabular}{c}
		$\iq(T)=2N-2$
		\end{tabular} &
			\textbf{surface} & \textbf{pseudo-surface}\\
\hline
\textbf{Levitt type}& 
		\begin{tabular}{c}
		$\iq(T)<2N-2$
		\end{tabular} 
			& \textbf{Levitt} & \textbf{pseudo-Levitt}\\
\hline 
\end{tabular}
\end{center}

The following remark is not necessary for the sequel of the paper, but
may help the reader's intuition.

\noindent\textbf{Remark.} In~\cite{chl1-I,chl1-II}, in collaboration with
Lustig, we defined and studied the dual
lamination of an $\R$-tree $T$ with dense orbits:
\[
L(T)=\{(X,Y)\in\partial^2\FN\ |\ \CQ(X)=\CQ(Y)\}.
\]
The $\CQ$-index of $T$ can be interpreted as the index of this dual
lamination.  

Using the dual lamination, with Lustig~\cite{chl4}, we defined the
compact heart $K_A\subseteq\bar T$ (for a basis $A$ of $\FN$). We
proved that the tree $T$ is completely encoded by a system of partial
isometries $S_A=(K_A,A)$. We also proved that the tree $T$ is
geometric if and only if the compact heart $K_A$ is a finite tree
(that is to say the convex hull of finitely many points). In our
previous work~\cite{ch-a} we used the Rips machine on the system of
isometries $S_A$ to get the bound on the $\CQ$-index of $T$. In
particular, an indecomposable tree $T$ is of Levitt type if and only
if the Rips machine never halts.

\subsection{Geometric index}\label{subsec:attracting}

As in Section~\ref{sec:Tphi}, an \iwip\ outer automorphism $\Phi$ has
an expansion factor $\lambda_\Phi>1$, an attracting $\R$-tree $T_\Phi$
in $\partial\CVN$. For each automorphism $\phi$ in the outer
class $\Phi$ there is a homothety $H$ of the metric completion $\bar
T_\Phi$, of ratio $\lambda_\Phi$, such that
\begin{equation}\label{eqn:Hphi} \forall P\in \bar T_\Phi,\;\;
\forall u\in\FN,\;\;\; H(uP)=\phi(u)H(P)
\end{equation} 

In addition, the action of $\Phi$ on the compactification of Culler
and Vogtmann's Outer space has a North-South dynamic and the
projective class of $T_\Phi$ is the attracting fixed point
\cite{ll-north-south}. Of course the attracting trees of $\Phi$ and
$\Phi^n$ ($n>0$) are equal.

For the attracting tree $T_\Phi$ of the \iwip\ outer automorphism
$\Phi$, the geometric index is well understood.

\begin{prop}[{\cite[Section~4]{gjll}}]\label{prop:geomauto} 
Let $\Psi$ be an \iwip\ outer automorphism. 
There exists a power $\Phi=\Psi^k$ ($k>0$) of $\Psi$ such
that:

\[
2\indaut(\Phi)=\igeo(T_\Phi),
\]
where $T_\Phi$ is the attracting tree of $\Phi$ (and of $\Psi$). \qed
\end{prop}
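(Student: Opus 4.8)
The plan is to pass to a forward rotationless power and then match, orbit by orbit, the local contributions to the two indices through the homothety dynamics on $\bar T_\Phi$. First I would invoke Proposition~\ref{prop:rotationless} to replace $\Psi$ by a power $\Phi=\Psi^k$ that is \FR; since the attracting tree is unchanged, $\igeo(T_\Phi)=\igeo(T_\Psi)$, so it suffices to prove the identity for this $\Phi$. To each automorphism $\phi$ in the outer class $\Phi$ the relation \eqref{eqn:Hphi} associates a homothety $H$ of $\bar T_\Phi$ of ratio $\lambda_\Phi>1$ satisfying $HuH\inv=\phi(u)$ as isometries. Because $\lambda_\Phi>1$ and $\bar T_\Phi$ is complete, $H$ has a unique fixed point $P=P_\phi$. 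Isogredient automorphisms $\phi'=i_u\circ\phi\circ i_{u\inv}$ give rise to the translated homothety, hence to the fixed point $uP$; thus isogrediency classes of $\Phi$ correspond bijectively to $\FN$-orbits of such homothety fixed points, and I would check that the essential classes are exactly the orbits of points $P$ with $\igeo(P)>0$.

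The heart of the argument is the local identity $2\indaut(\phi)=\igeo(P_\phi)$, which I would establish by matching the two sides term by term. For the fixed-subgroup part, the conjugation $HuH\inv=\phi(u)$ shows that $\phi$ preserves $\Stab(P)$ and, using that $\Phi$ is \FR\ (so that $\Per(\phi)=\Fix(\phi)$), that $\Stab(P)=\Fix(\phi)$; in particular $\rank(\Stab(P))=\rank(\Fix(\phi))$. For the remaining part, I would produce an equivariant bijection between the directions $\pi_0(\bar T_\Phi\ssm\{P\})$ at $P$ and the set $\Att(\phi)$ of attracting fixed points of $\partial\phi$. The key tool is the map $\CQ:\partial\FN\to\hat T_\Phi$ of Proposition~\ref{prop:Qexists}: since $\CQ$ intertwines the action of $\partial\phi$ with that of $H$ and $H$ is expanding at $P$, each attracting fixed point $X\in\Att(\phi)$ is sent by $\CQ$ to $P$ along a well-defined $H$-fixed direction, and conversely every direction at $P$ arises in this way from a unique attracting fixed point (here the \FR\ property is again essential, as it forces $H$ to fix each direction rather than permute a periodic family). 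This bijection is equivariant for the actions of $\Fix(\phi)=\Stab(P)$, so $\#(\Att(\phi)/\Fix(\phi))=\#(\pi_0(\bar T_\Phi\ssm\{P\})/\Stab(P))$, and combining with the rank equality yields $2\indaut(\phi)=\igeo(P)$.

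Finally I would sum over essential isogrediency classes. On the automorphism side $\indaut(\Phi)=\sum_{[\phi]}\max(0;\indaut(\phi))$, while on the tree side $\igeo(T_\Phi)=\sum_{[P]\in T_\Phi/\FN}\igeo(P)$; the orbit correspondence of the first paragraph together with the local identity matches the positive contributions, and non-essential classes and points of non-positive local index contribute zero on both sides, giving $2\indaut(\Phi)=\igeo(T_\Phi)$. The main obstacle is the equivariant bijection between directions at $P$ and attracting fixed points: controlling the fibers of $\CQ$ over the branch point and verifying that the expanding homothety dynamics exactly reproduce the attracting dynamics of $\partial\phi$ is the delicate step, and it is precisely where the Levitt--Lustig construction of $\CQ$ and the forward rotationless normalization are needed.
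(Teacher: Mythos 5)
First, a remark on the comparison you are being measured against: the paper offers no proof of Proposition~\ref{prop:geomauto} at all — it is quoted from \cite[Section~4]{gjll} — so your attempt has to be judged against that source and against the parallel argument the paper does write out for Proposition~\ref{prop:indexrank}. Your skeleton is the right one: pass to a \FR{} power, identify essential isogrediency classes with $\FN$-orbits of centers of the associated homotheties, prove a local identity $2\indaut(\phi)=\igeo(C_H)$, and sum. The rank comparison $\rank(\Fix(\phi))=\rank(\Stab(C_H))$ and the final bookkeeping are fine modulo routine details.

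The gap is in the central step, and the mechanism you propose for it is false as stated. You claim that each $X\in\Att(\phi)$ ``is sent by $\CQ$ to $P$ along a well-defined $H$-fixed direction.'' By \cite[Lemma~3.5]{gjll} — invoked verbatim in the paper's proof of Lemma~\ref{lem:Qindexphiindexlocal} — attracting fixed points of $\partial\phi$ are sent by $\CQ$ into the boundary at infinity $\partial T_\Phi$, not to $C_H$: indeed $\phi^n(u)C_H=H^n(uC_H)$ escapes to infinity because $H$ is expanding, so it is the \emph{repelling} fixed points whose $\CQ$-image is $C_H$. The set $\Att(\phi)$ appears as a $\CQ$-fiber only in the \emph{other} tree $T_{\Phi\inv}$ (that is the content of Proposition~\ref{prop:indexrank}), whereas its relation to the geometric index of $T_\Phi$ is of a different nature: each $X\in\Att(\phi)$ determines an $H$-invariant eigenray issuing from $C_H$, with endpoint $\CQ(X)\in\partial T_\Phi$, hence a direction at $C_H$, and one must prove that the resulting map $\Att(\phi)\to\pi_0(T_\Phi\ssm\{C_H\})$ is a $\Fix(\phi)$-equivariant bijection. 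Injectivity and, above all, surjectivity — every direction at $C_H$ contains exactly one such eigenray — are precisely what \cite{gjll} establish through the train-track construction of $T_\Phi$ recalled in Section~\ref{sec:Tphi}: directions at $C_H$ come from the gates at a periodic vertex of the train-track map, and each gate produces an attracting fixed word by iterating $f$ on an edge. Your proposal asserts this bijection rather than proving it, and that assertion is the substance of the proposition. A second, smaller omission: to account for all of $\igeo(T_\Phi)$ you must also show that \emph{every} branch point of $T_\Phi$ is the center of a homothety $uH$ attached to some automorphism $i_u\circ\phi$ in the class; this requires the finiteness of branch-point orbits together with property (FR\ref{fr2}), exactly as in the second half of the paper's proof of Proposition~\ref{prop:indexrank}, and your first paragraph only records the correspondence in the easy direction.
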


\subsection{{$\CQ$}-index}\label{sec:iautoiQ}

Let $\Phi$ be an \iwip\ outer automorphism of $\FN$. Let $T_\Phi$ be
its attracting tree.  The action of $\FN$ on $T_\Phi$ has dense
orbits. 

Let $\phi$ an automorphism in the outer class $\Phi$.
The homothety $H$ associated to $\phi$ 
extends continuously to an homeomorphism of
the boundary at infinity of $T_\Phi$ which we still denote by $H$. We
get from Proposition~\ref{prop:Qexists} and
identity~\ref{eqn:Hphi}:
\begin{equation}\label{eqn:QH} \forall
X\in\partial\FN, \CQ(\partial\phi(X))=H(\CQ(X)).
\end{equation}

We are going to prove that the $\CQ$-index of $T_{\Phi}$ is
twice the index of $\Phi\inv$. As mentioned in the introduction
for geometric automorphisms both these numbers are equal to
$2N-2$ and thus we restrict to the study of non-geometric
automorphisms. For the rest of this section we assume that
$\Phi$ is non-geometric. 
This will be used in two ways:
\begin{itemize}
\item the action of $\FN$ on $T_\Phi$ is free;
\item for any $\phi$ in the outer class $\Phi$, all the fixed points
of $\phi$ in $\partial\FN$ are regular.
\end{itemize}

Let $C_H$ be the center of the homothety $H$. The following
Lemma is essentially contained in \cite{gjll}, although
the map $\CQ$ is not used there.

\begin{lem}\label{lem:Qindexphiindexlocal} 
Let $\Phi\in\Out(\FN)$ be a \FR\ non-geometric \iwip\ outer 
automorphism. 
Let $T_\Phi$ be the attracting tree of $\Phi$.
Let $\phi\in\Phi$ be an automorphism in the outer class $\Phi$, 
and let $H$ be the homothety of $T_\Phi$
associated to $\phi$, with $C_H$ its center.
The $\CQ$-fiber of $C_H$ is the set of repelling points of
$\phi$. 
\end{lem}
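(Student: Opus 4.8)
The plan is to exploit the semiconjugacy \eqref{eqn:QH}, which reads $\CQ\circ\partial\phi=H\circ\CQ$, together with the fact that $H$ is an \emph{expanding} homothety. The first observation is that if $X\in\partial\FN$ is fixed by $\partial\phi$, then $\CQ(X)$ is fixed by $H$; since $H$ has ratio $\lambda_\Phi>1$, its unique fixed point in $\bar T_\Phi$ is the center $C_H$, while all its other fixed points lie in $\partial T_\Phi$. So the whole content is to match the repelling fixed points of $\partial\phi$ with the source $C_H$ and the attracting ones with the sinks in $\partial T_\Phi$. The computational engine is the conjugation formula extracted from \eqref{eqn:Hphi}: for every $u\in\FN$, every $P\in\bar T_\Phi$ and every $m\in\N$,
\[
\phi^{m}(u)\cdot P=H^{m}\bigl(u\cdot H^{-m}(P)\bigr),\qquad
\phi^{-m}(u)\cdot P=H^{-m}\bigl(u\cdot H^{m}(P)\bigr),
\]
combined with the elementary fact that $H^{\pm m}$ scales distances to its fixed center $C_H$ by the factor $\lambda_\Phi^{\pm m}$.

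For the inclusion that every repelling point lies in $\CQ\inv(C_H)$, let $X$ be repelling, so $\partial\phi(X)=X$ and there is $u\in\FN$ with $\phi^{-m}(u)\to X$. By Proposition~\ref{prop:Qexists} (and compactness of $\hat T_\Phi$ in the observers' topology), for every $P\in T_\Phi$ the sequence $\phi^{-m}(u)\cdot P$ converges to $\CQ(X)$. Writing $\delta_u=d(C_H,u\,C_H)$ and using the conjugation formula, one gets
\[
d\bigl(C_H,\phi^{-m}(u)\cdot P\bigr)=\lambda_\Phi^{-m}\,d\bigl(C_H,u\,H^{m}(P)\bigr)\le\lambda_\Phi^{-m}\delta_u+d(C_H,P).
\]
Passing to the limit in $m$ (the bound also shows $\CQ(X)\notin\partial T_\Phi$, so $d(C_H,\cdot)$ is continuous at $\CQ(X)$) yields $d(C_H,\CQ(X))\le d(C_H,P)$. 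Since $C_H\in\bar T_\Phi=\overline{T_\Phi}$, the infimum of $d(C_H,P)$ over $P\in T_\Phi$ is $0$, whence $\CQ(X)=C_H$.

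For the reverse inclusion, let $X\in\CQ\inv(C_H)$. By \eqref{eqn:QH} and $H(C_H)=C_H$, the fiber $\CQ\inv(C_H)$ is invariant under $\partial\phi$; as this fiber is finite, $X$ is periodic for $\partial\phi$, and property (FR\ref{fr1}) forces $\partial\phi(X)=X$. Because $\Phi$ is non-geometric, $X$ is regular, hence attracting or repelling. It cannot be attracting: if $\phi^{m}(v)\to X$ then $v\neq1$, so freeness of the action gives $\delta_v=d(C_H,v\,C_H)>0$, and the conjugation formula yields $d(C_H,\phi^{m}(v)\cdot P)\ge\lambda_\Phi^{m}\bigl(\delta_v-\lambda_\Phi^{-m}d(C_H,P)\bigr)\to\infty$, forcing $\CQ(X)\in\partial T_\Phi$ and contradicting $\CQ(X)=C_H$. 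Therefore $X$ is repelling, which completes both inclusions.

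The hard part will be making the attracting/repelling dichotomy rigorous through the \emph{non-injective} map $\CQ$, and securing the two technical inputs on which the above rests: compactness of $\hat T_\Phi$ (so that the convergence in Proposition~\ref{prop:Qexists} upgrades to genuine convergence of $\phi^{\pm m}(u)\cdot P$, closed balls in $\bar T_\Phi$ being non-compact in general), and finiteness of the fiber $\CQ\inv(C_H)$ (so that $\partial\phi$-invariance gives periodicity, and thus, via \FR, fixedness). The finiteness of fibers over points of $\bar T_\Phi$, together with the triviality of $\Stab(C_H)$ needed to ensure $\delta_v>0$, are consequences of the earlier analysis of $\CQ$ for non-geometric trees; granting these, the contraction estimate at the source $C_H$ and the expansion estimate away from it carry out the matching.
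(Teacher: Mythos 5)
Your overall architecture is the same as the paper's: the contraction $H^{-1}$ sends repelling points into $\CQ\inv(C_H)$ via Proposition~\ref{prop:Qexists}, and conversely equivariance \eqref{eqn:QH} plus finiteness of the fiber \cite[Corollary~5.4]{ch-a} plus (FR\ref{fr1}) reduces the converse to excluding attracting fixed points. The first inclusion and the reduction in the converse are fine (your infimum-over-$P$ variant works because the distance to $C_H$ is lower semicontinuous for the observers' topology, though taking $P=C_H$ directly, as the paper does, is shorter).

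The genuine gap is the step ``$d(C_H,\phi^{m}(v)\cdot P)\to\infty$, forcing $\CQ(X)\in\partial T_\Phi$.'' The convergence $\phi^m(v)\cdot P\to\CQ(X)$ furnished by Proposition~\ref{prop:Qexists} takes place in the observers' topology on $\hat T_\Phi$, and for that topology the function $d(C_H,\cdot)$ is only \emph{lower} semicontinuous, not upper semicontinuous: a sequence $x_m$ with $d(C_H,x_m)\to\infty$ can perfectly well converge in the observers' topology to a point of $\bar T_\Phi$ at finite distance. (Convergence to $C_H$ only requires that for each fixed $Q\neq C_H$ the segments $[C_H,x_m]$ eventually avoid $Q$; since branch points are dense in $T_\Phi$, long segments that keep changing direction near $C_H$ can achieve this.) So your expansion estimate does not by itself rule out $\CQ(X)=C_H$ for an attracting $X$. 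What is actually needed is that the points $H^{m}(v\,C_H)$ line up along an $H$-periodic eigenray issuing from $C_H$, so that the limit is genuinely an end of $T_\Phi$; this is the content of \cite[Lemma~3.5]{gjll} (attracting fixed points of $\partial\phi$ are sent by $\CQ$ into $\partial T_\Phi$), which is precisely the citation the paper uses at this point. Your proof becomes correct once you replace the distance estimate by an appeal to that lemma, or supply the eigenray construction yourself.
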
 

\begin{proof} 
Let $X\in\partial\FN$ be a
repelling point of $\partial\phi$. By definition there exists an
element $u\in\FN$ such that the sequence $(\phi^{-n}(u))_n$
converges towards $X$. By Equation~\ref{eqn:Hphi},
\[
\phi^{-n}(u)C_H=\phi^{-n}(u)H^{-n}(C_H)=H^{-n}(uC_H).
\]
The
homothety $H^{-1}$ is strictly contracting and thus the sequence
of points $(\phi^{-n}(u)C_H)_n$ converges towards $C_H$. By
Proposition~\ref{prop:Qexists} we get that $\CQ(X)=C_H$.

Conversely let $X\in\CQ\inv(C_H)$ be a point in the $\CQ$-fiber of
$C_H$. Using the identity~\ref{eqn:QH}, $\partial\phi(X)$ is also in
the $\CQ$-fiber. The $\CQ$-fiber is finite by
\cite[Corollary~5.4]{ch-a}, $X$ is a periodic point of   %\ref{cor:Qindexfinite}
$\partial\phi$.  Since $\Phi$ satisfies property (FR\ref{fr1}), $X$ is
a fixed point of $\partial\phi$. From \cite[Lemma~3.5]{gjll},
attracting fixed points of $\partial\phi$ are mapped by $\CQ$ to
points in the boundary at infinity $\partial T_\Phi$. Thus $X$ has to
be a repelling fixed point of $\partial\phi$.
\end{proof}

\begin{prop}\label{prop:indexrank} 
Let $\Phi\in\Out(\FN)$ be a \FR\ non-geometric \iwip\ outer 
automorphism. Let $T_\Phi$ be the attracting tree of $\Phi$.
Then
\[
2\indaut(\Phi\inv)=\iq(T_{\Phi}).\]
\end{prop}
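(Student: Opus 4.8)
The plan is to set up an index-preserving bijection between the essential isogrediency classes of automorphisms in the outer class $\Phi\inv$ and the $\FN$-orbits of points of $\hat T_\Phi$ carrying a positive local $\CQ$-index, and then to check that corresponding terms of the two sums agree. Throughout I use that, since $\Phi$ is non-geometric, the action of $\FN$ on $T_\Phi$ is free and every fixed point of every $\partial\sigma$ is regular; hence all fixed subgroups are trivial and the local index formulas simplify to $\indaut(\sigma)=\frac12\#\Att(\sigma)-1$ and $\iq(P)=\#\CQ\inv(P)-2$.

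First I would package the dictionary between automorphisms and homothety centers. For $\sigma\in\Phi\inv$ let $H_\sigma$ be the associated homothety of $\bar T_\Phi$ (of ratio $1/\lambda_\Phi<1$) and $C_\sigma$ its center. Applying Lemma~\ref{lem:Qindexphiindexlocal} to $\sigma\inv\in\Phi$, whose repelling points are exactly the attracting points of $\sigma$, and noting that $\sigma$ and $\sigma\inv$ share the same homothety center, gives $\CQ\inv(C_\sigma)=\Att(\sigma)$. This yields the crucial local identity $2\indaut(\sigma)=\#\Att(\sigma)-2=\#\CQ\inv(C_\sigma)-2=\iq(C_\sigma)$. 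Thus an isogrediency class is essential precisely when its center has positive local $\CQ$-index, and the two quantities we must sum agree term by term; it remains only to match the indexing sets.

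It then remains to show that $[\sigma]\mapsto[C_\sigma]$ is a well-defined bijection onto the $\FN$-orbits of positive-index points. The conjugation law for homotheties gives $C_{i_u\sigma i_{u\inv}}=u\,C_\sigma$, so the map descends to isogrediency classes. For injectivity, if $C_{\sigma'}=u\,C_\sigma$ then, after replacing $\sigma'$ by the isogredient $i_u\sigma i_{u\inv}$, the two automorphisms have the same center; writing $\sigma'=i_w\sigma$ and using $H_{i_w\sigma}=w\,H_\sigma$ forces $w$ to fix $C_\sigma$, hence to permute the finite set $\CQ\inv(C_\sigma)=\Att(\sigma)$ by left multiplication. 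Since a nontrivial (and therefore infinite-order) element of $\FN$ fixes only its two endpoints in $\partial\FN$, while $\#\Att(\sigma)\geq 3$ for an essential class, we conclude $w=1$ and $\sigma'=\sigma$.

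The surjectivity is the main obstacle. Given a point $P$ with $\#\CQ\inv(P)\geq 3$, I would exploit that the $\FN$-orbits carrying positive local $\CQ$-index are finite in number, because $\iq(T_\Phi)\leq 2N-2$, and that they are permuted by $H_{\sigma_0}$ for a fixed base automorphism $\sigma_0\in\Phi\inv$, the identity $\CQ\inv(H_{\sigma_0}(P))=\partial\sigma_0(\CQ\inv(P))$ showing that fiber cardinalities are preserved. By pigeonhole some power fixes the orbit of $P$, say $H_{\sigma_0}^{\,n}(P)=w\,P$; then $w\inv H_{\sigma_0}^{\,n}$ is a homothety of center $P$ associated to some $\psi\in\Phi^{-n}$, and the computation above gives $\CQ\inv(P)=\Att(\psi)$, so $\psi$ is essential. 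Here the forward-rotationless hypothesis enters decisively: property (FR\ref{fr2}), applied to $\Phi\inv$ (which is forward rotationless whenever $\Phi$ is), produces $\sigma_1\in\Phi\inv$ with $\psi=\sigma_1^{\,n}$, and since a homothety and its $n$-th power have the same center we obtain $P=C_{\sigma_1}$. Summing the term-by-term equality $2\indaut(\sigma)=\iq(C_\sigma)$ over this bijection then yields $2\indaut(\Phi\inv)=\iq(T_\Phi)$.
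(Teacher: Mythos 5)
Your proof is correct and follows essentially the same route as the paper's: the same dictionary between isogrediency classes in $\Phi\inv$ and $\FN$-orbits of homothety centers, the same local identity coming from Lemma~\ref{lem:Qindexphiindexlocal}, and the same finiteness-plus-pigeonhole argument combined with (FR\ref{fr2}) for surjectivity. The only differences are cosmetic (you work with the contracting homotheties of $\sigma\in\Phi\inv$ rather than the expanding ones of $\phi\in\Phi$) and that you spell out the injectivity step and the application of (FR\ref{fr2}) to $\Phi\inv$, both of which the paper leaves implicit.
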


\begin{proof} 
To each automorphism $\phi$ in the outer class $\Phi$ is associated a
homothety $H$ of $T_\Phi$ and the center $C_H$ of this homothety.  As
the action of $\FN$ on $T_\Phi$ is free, two automorphisms are
isogredient if and only if the corresponding centers are in the same
$\FN$-orbit.

The index of $\Phi\inv$ is the sum over all essential 
isogrediency classes
of automorphism $\phi\inv$ in $\Phi\inv$ of the index of
$\phi\inv$. For each of these automorphisms the index
$2\indaut(\phi\inv)$ is equal by
Proposition~\ref{lem:Qindexphiindexlocal} to the contribution
$\#\CQ^{-1}(C_H)$
of the orbit of $C_H$ to the $\CQ$ index of $T_\Phi$.

Conversely, let now $P$ be a point in $\bar T_{\Phi}$ with at least
three elements in its $\CQ$-fiber. Let $\phi$ be an automorphism in
$\Phi$ and let $H$ be the homothety of $T_{\Phi}$ associated to
$\phi$. For any integer $n$, the $\CQ$-fiber
$\CQ\inv(H^n(P))=\partial\phi^{n}(\CQ\inv(P))$ of $H^n(P)$ also has at
least three elements. By \cite[Theorem~5.3]{ch-a} there are finitely many %\ref{thm:iq}
orbits of such points in $T_{\Phi}$ and thus we can assume that
$H^n(P)=wP$ for some $w\in\FN$ and some integer $n>0$. Then $P$ is the
center of the homothety $w\inv H^n$ associated to
$i_{w\inv}\circ\phi^{n}$. 
Since $\Phi$ satisfies property (FR\ref{fr2}),
$P$ is the center of a homothety $uH$ associated to
$i_u\circ\phi$ for some $u\in\FN$.
This concludes the proof of the equality of the indices.
\end{proof}

This Proposition can alternatively be deduced from the techniques of
Handel and Mosher~\cite{hm-axes}.

\section{Botanical classification of irreducible automorphisms}\label{sec:pairing}

\begin{thm}\label{thm:QindexTindex} 
Let $\Phi$ be an \iwip\  outer
automorphism of $\FN$. Let $T_\Phi$ and $T_{\Phi\inv}$ be its
attracting and repelling trees. Then, the $\CQ$-index of the
attracting tree is equal to the geometric index of the repelling tree:
\[
\iq(T_\Phi)=\igeo(T_{\Phi\inv}).\]

\end{thm}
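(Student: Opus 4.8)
The plan is to bridge the two index computations already available: Proposition~\ref{prop:indexrank}, which relates the $\CQ$-index of the attracting tree to the automorphism index, and Proposition~\ref{prop:geomauto}, which relates the geometric index of the attracting tree to the automorphism index. Applying the former to $\Phi$ and the latter to $\Phi\inv$ should both produce the quantity $2\indaut(\Phi\inv)$, yielding the claimed equality $\iq(T_\Phi)=\igeo(T_{\Phi\inv})$. The first observation is that both sides of the desired identity are unchanged when $\Phi$ is replaced by a positive power: the attracting tree $T_\Phi$ and the repelling tree $T_{\Phi\inv}=T_{(\Phi^m)\inv}$ do not depend on the power, hence neither do $\iq(T_\Phi)$ and $\igeo(T_{\Phi\inv})$. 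We are therefore free to replace $\Phi$ by any convenient power throughout.

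I would first dispose of the geometric case. If $\Phi$ is geometric then $T_\Phi$ and $T_{\Phi\inv}$ are both surface trees, that is, both geometric and of surface type. Being of surface type gives $\iq(T_\Phi)=2N-2$, and being geometric gives $\igeo(T_{\Phi\inv})=2N-2$, so the two indices agree. For the remainder assume $\Phi$ is non-geometric; note that geometricity is inversion-symmetric, so $\Phi\inv$ is non-geometric as well, and both trees are indecomposable by Theorem~\ref{thm:iwipindec}, so all the relevant indices are defined.

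The crucial preparatory step is to arrange the forward rotationless hypothesis simultaneously for $\Phi$ and $\Phi\inv$. By Proposition~\ref{prop:rotationless} some power $\Phi^{k_1}$ is \FR, and some power $(\Phi\inv)^{k_2}=\Phi^{-k_2}$ is \FR. Since a positive power of an \FR\ automorphism is again \FR\ (properties (FR1) and (FR2) pass to powers directly), taking $m=k_1k_2$ makes both $\Phi^m$ and $\Phi^{-m}$ \FR. Replacing $\Phi$ by $\Phi^m$, which is harmless by the first paragraph, I may assume both $\Phi$ and $\Phi\inv$ are \FR. The point of this is that, for an \FR\ automorphism, (FR1) forces $\Fix(\partial\phi^n)=\Fix(\partial\phi)$ and preserves the attracting/repelling trichotomy, while (FR2) matches essential isogrediency classes of $\Phi^n$ with those of $\Phi$; consequently the automorphism index is power-invariant, $\indaut(\Phi^n)=\indaut(\Phi)$ and $\indaut(\Phi^{-n})=\indaut(\Phi\inv)$ for all $n>0$.

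It then remains to combine the two propositions. Proposition~\ref{prop:indexrank}, applied to the \FR\ non-geometric \iwip\ $\Phi$, gives $2\indaut(\Phi\inv)=\iq(T_\Phi)$. Proposition~\ref{prop:geomauto}, applied to the \iwip\ $\Phi\inv$, furnishes a power $\Phi^{-k}$ with $2\indaut(\Phi^{-k})=\igeo(T_{\Phi^{-k}})$; since $T_{\Phi^{-k}}=T_{\Phi\inv}$ and, by power-invariance of the index under the \FR\ hypothesis, $\indaut(\Phi^{-k})=\indaut(\Phi\inv)$, this reads $2\indaut(\Phi\inv)=\igeo(T_{\Phi\inv})$. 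Equating the two expressions for $2\indaut(\Phi\inv)$ yields $\iq(T_\Phi)=\igeo(T_{\Phi\inv})$. I expect the main obstacle to be precisely this power-invariance of $\indaut$: the automorphism index is genuinely not preserved under powers in general, and reconciling the $\indaut(\Phi^{-k})$ coming from Proposition~\ref{prop:geomauto} with the $\indaut(\Phi\inv)$ appearing in Proposition~\ref{prop:indexrank} is exactly what the \FR\ reduction is engineered to guarantee; care is needed to verify that \FR\ descends to $\Phi\inv$ and is inherited by powers, so that a single power serves both propositions at once.
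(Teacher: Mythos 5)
Your proof is correct and follows essentially the same route as the paper's: dispose of the geometric case by maximality of all four indices, then combine Proposition~\ref{prop:geomauto} (applied to $\Phi\inv$) with Proposition~\ref{prop:indexrank} (applied to $\Phi$) through the common quantity $2\indaut(\Phi\inv)$. The paper states this in two lines and leaves implicit the reduction you spell out --- that both sides are power-invariant and that a single power can be chosen making $\Phi$ and $\Phi\inv$ simultaneously \FR\ so that the two propositions refer to the same automorphism index --- so your added care is a faithful elaboration rather than a different argument.
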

\begin{proof} 
First, if $\Phi$ is geometric, then the trees $T_\Phi$
and $T_{\Phi\inv}$ have maximal geometric indices $2N-2$. On the
other hand the trees $T_\Phi$ and $T_{\Phi\inv}$ are surface trees and thus
their $\CQ$-indices are also maximal: 
\[
\igeo(T_{\Phi})=\iq(T_\Phi)=\igeo(T_{\Phi\inv})=\iq(T_{\Phi\inv})=2N-2
\]

We now assume that $\Phi$ is not geometric and we can apply
Propositions~\ref{prop:geomauto} and \ref{prop:indexrank} to get
the desired equality.
\end{proof}

From Theorem~\ref{thm:QindexTindex} and from the
characterization of geometric and surface-type trees by the
maximality of the indices we get

\begin{thm}\label{thm:boldconjecture} Let $\Phi$ be an
\iwip\  outer automorphism of
$\FN$. Let $T_\Phi$ and $T_{\Phi\inv}$ be its attracting and
repelling trees. Then exactly one of the following occurs
\begin{enumerate} 
\item\label{item:geometric} $T_\Phi$ and
$T_{\Phi\inv}$ are surface trees; 
\item\label{item:para} $T_\Phi$ is
Levitt and $T_{\Phi\inv}$ is pseudo-surface; 
\item\label{item:contra} $T_{\Phi\inv}$
is Levitt and $T_{\Phi}$ is pseudo-surface;
\item\label{item:levitt} $T_\Phi$ and $T_{\Phi\inv}$ are
pseudo-Levitt.
\end{enumerate}
\end{thm}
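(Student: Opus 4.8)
The plan is to deduce the four-way classification purely from the single index equality of Theorem~\ref{thm:QindexTindex}, applied to both $\Phi$ and $\Phi\inv$, together with the two maximality criteria recalled in Section~\ref{sec:botanictrees}. The conceptual point is that for a single tree the two dichotomies (geometric versus not, surface type versus Levitt type) are a priori independent, but applying Theorem~\ref{thm:QindexTindex} in both directions couples the surface-type behaviour of each tree to the geometric behaviour of the \emph{other}; this crossing is exactly what forces the four listed combinations for the pair and rules out the rest.

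First I would check that both $T_\Phi$ and $T_{\Phi\inv}$ are covered by the botanical table. Each has dense orbits, and since $\Phi\inv$ is again an \iwip, Theorem~\ref{thm:iwipindec} applied to $\Phi$ and to $\Phi\inv$ shows that each of $T_\Phi$ and $T_{\Phi\inv}$ is indecomposable. Hence the classification of Section~\ref{sec:botanictrees} assigns each tree a type according to the two parameters $\igeo(\,\cdot\,)$ and $\iq(\,\cdot\,)$, both bounded above by $2N-2$, with value $2N-2$ characterizing respectively ``geometric'' and ``surface type''.

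Next I would invoke Theorem~\ref{thm:QindexTindex} twice. For $\Phi$ it gives $\iq(T_\Phi)=\igeo(T_{\Phi\inv})$; since the attracting tree of $\Phi\inv$ is $T_{\Phi\inv}$ and its repelling tree is $T_\Phi$, the same theorem applied to $\Phi\inv$ gives $\iq(T_{\Phi\inv})=\igeo(T_\Phi)$. Combining each identity with the maximality criteria, $T_\Phi$ is of surface type if and only if $T_{\Phi\inv}$ is geometric, and symmetrically $T_{\Phi\inv}$ is of surface type if and only if $T_\Phi$ is geometric.

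Finally I would run the case analysis on the two independent Boolean statements ``$T_\Phi$ is geometric'' and ``$T_{\Phi\inv}$ is geometric''. If both hold, each tree is geometric and of surface type, hence a surface tree, giving case (1). If $T_\Phi$ is geometric but $T_{\Phi\inv}$ is not, then $T_\Phi$ is geometric and, since $T_{\Phi\inv}$ is not geometric, of Levitt type, hence Levitt, while $T_{\Phi\inv}$ is non-geometric and, since $T_\Phi$ is geometric, of surface type, hence pseudo-surface, giving case (2); the reversed hypothesis gives case (3) by symmetry. If neither is geometric, each is non-geometric and of Levitt type, hence pseudo-Levitt, giving case (4). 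These four hypotheses on the pair of Boolean values are mutually exclusive and exhaustive, so exactly one case occurs. The only points genuinely needing care are verifying that the hypotheses of Theorem~\ref{thm:QindexTindex} and of the botanical table (dense orbits and indecomposability) are available for both trees; the substantive difficulty is already absorbed by Theorem~\ref{thm:QindexTindex} and the index bounds, so no further estimate is required at this stage.
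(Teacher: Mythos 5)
Your proposal is correct and follows essentially the same route as the paper: indecomposability of both trees via Theorem~\ref{thm:iwipindec}, the characterizations of ``geometric'' and ``surface type'' by maximality of $\igeo$ and $\iq$ respectively, and Theorem~\ref{thm:QindexTindex} applied to both $\Phi$ and $\Phi\inv$ to couple the two dichotomies across the pair of trees. Your write-up merely makes explicit the double application of Theorem~\ref{thm:QindexTindex} and the four-way Boolean case analysis that the paper leaves implicit.
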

\begin{proof} 
  The trees $T_\Phi$ and $T_{\Phi\inv}$ are indecomposable by
  Theorem~\ref{thm:iwipindec} and thus they are either of surface type
  or of Levitt type by
  \cite[Proposition~5.14]{ch-a}.  %\ref{prop:mixingdichotomy}
Recall, from \cite{gl-rank} (see also \cite[Theorem~5.9]{ch-a} or   %\ref{thm:geometric}
\cite[Corollary~6.1]{chl4}) that $T_\Phi$ is
geometric
if and only if its geometric index is maximal:
\[
\igeo(T_\Phi)=2N-2.
\]
From \cite[Theorem~5.10]{ch-a}, $T_\Phi$  %\ref{thm:surface}
is of surface type if and only if its $\CQ$-index is maximal:
\[
\iq(T_\Phi)=2N-2.
\]
The Theorem now follows from
Theorem~\ref{thm:QindexTindex}.
\end{proof}

Let $\Phi\in\Out(F_N)$ be an \iwip\  outer automorphism.

The outer automorphism $\Phi$ is {\bf geometric} if both its
attracting and repelling trees $T_\Phi$ and $T_{\Phi^{-1}}$ are
geometric. This is equivalent to saying that $\Phi$ is induced by a
pseudo-Anosov homeomorphism of a surface with boundary, see
\cite{guir-core} and \cite{hm-parageometric}. This is
case~\ref{item:geometric} of Theorem~\ref{thm:boldconjecture}.

The outer automorphism $\Phi$ is {\bf parageometric} if its
attracting tree $T_\Phi$ is geometric but its repelling tree
$T_{\Phi^{-1}}$ is not. This is case~\ref{item:para} of
Theorem~\ref{thm:boldconjecture}.

The outer automorphism $\Phi$ is \textbf{pseudo-Levitt} if both
its attracting and repelling trees are not geometric. This is
case~\ref{item:levitt} of Theorem~\ref{thm:boldconjecture}

\medskip

We now bring expansion factors into play.  An \iwip\ outer
automorphism $\Phi$ of $\FN$ has an expansion factor $\lambda_\Phi>1$:
it is the exponential growth rate of (non fixed) conjugacy classes
under iteration of $\Phi$.

If $\Phi$ is geometric, the expansion factor of $\Phi$ is equal
to the expansion factor of the associated pseudo-Anosov mapping
class and thus $\lambda_\Phi=\lambda_{\Phi\inv}$.

Handel and Mosher \cite{hm-parageometric} proved that if $\Phi$ is a
parageometric outer automorphism of $\FN$ then
$\lambda_\Phi>\lambda_{\Phi^{-1}}$ (see also \cite{bbc}). Examples are also given by
Gautero~\cite{gaut-mappingtorus}.

For pseudo-Levitt outer automorphisms of $\FN$ nothing can be said on
the comparison of the expansion factors of the automorphism and its
inverse. On one hand, Handel and Mosher give in the introduction of
\cite{hm-parageometric} an explicit example of a non geometric
automorphism with $\lambda_\Phi=\lambda_{\Phi\inv}$: thus this
automorphism is pseudo-Levitt. On the other hand, there are examples
of pseudo-Levitt automorphisms with
$\lambda_\Phi>\lambda_{\Phi^{-1}}$.  Let $\phi\in\Aut(F_3)$ be the automorphism
such that 
\[
\phi:\begin{array}[t]{rcl}a&\mapsto&b\\ b&\mapsto&ac\\ c&\mapsto&a\end{array}
\quad\text{ and }\quad\phi^{-1}:\begin{array}[t]{rcl}a&\mapsto&c\\ b&\mapsto&a\\ c&\mapsto&c^{-1}b\end{array}
\]
Let $\Phi$ be its outer class. Then $\Phi^6$ is \FR, has index
$\ind(\Phi^6)=3/2<2$. The expansion factor is $\lambda_\Phi\simeq
1,3247$.  The outer automorphism $\Phi^{-3}$ is \FR, has index
$\ind(\Phi^{-3})=1/2<2$. The expansion factor is
$\lambda_{\Phi^{-1}}\simeq 1,4655 > \lambda_\Phi$.  The computation of
these two indices can be achieved using the algorithm of
\cite{jull-these}.

\medskip

Now that we have classified outer automorphisms of $\FN$ into four
categories, questions of genericity naturally arise.  In particular,
is a generic outer automorphism of $\FN$ \iwip, pseudo-Levitt and with
distinct expansion factors? This is suggested by Handel and Mosher
\cite{hm-parageometric}, in particular for statistical genericity:
given a set of generators of $\Out(F_N)$ and considering the
word-metric associated to it, is it the case that
\[
\lim_{k\to\infty}\frac{\#(\textrm{pseudo-Levitt iwip with }\lambda_\Phi\neq\lambda_{\Phi^{-1}})\cap B(k))}
{\#B(k)}=1
\]
where $B(k)$ is the ball of radius $k$, centered
at $1$, in $\Out(F_N)$?

\subsection{Botanical memo}

In this Section we give a glossary of our classification of  automorphisms for the working mathematician.
                
For a \FR\  \iwip\  outer automorphism $\Phi$ of $\FN$, we used $6$
indices which are related in the following way:

\[
\begin{array}{|c|} \hline
2\indaut(\Phi)=\igeo(T_\Phi)=\iq(T_{\Phi\inv})\\ \hline
2\indaut(\Phi\inv)=\igeo(T_{\Phi\inv})=\iq(T_{\Phi})\\ \hline
\end{array}
\]
All these indices are bounded above by $2N-2$.
We sum up our Theorem~\ref{thm:boldconjecture} in the following
table.

\begin{center}
\tabcolsep=.2em \noindent\begin{tabular}{|ccccc|} \hline
Automorphisms & & Trees & & Indices \\

\hline

$\Phi$ geometric & $\Leftrightarrow$ & $T_\Phi$ and
$T_{\Phi^{-1}}$ geometric & $\Leftrightarrow$ &
$\indaut(\Phi)=\indaut(\Phi^{-1})=N-1$\\

$\Updownarrow$&&$\Updownarrow$&&\\

$\Phi^{-1}$ geometric && $T_\Phi$ surface& &
\\

&&$\Updownarrow$&&\\

&& $T_{\Phi\inv}$ surface&&\\
 
\hline

$\Phi$ parageometric & $\Leftrightarrow$ & $\begin{cases} T_\Phi
\text{ geometric} \\ \mbox{and}\\ T_{\Phi^{-1}} \text{ non
geometric}
\end{cases}$ & $\Leftrightarrow$& $\begin{cases}
\indaut(\Phi)=N-1\\ \mbox{and}\\ \indaut(\Phi^{-1})<N-1
\end{cases}$\\
 
&&$\Updownarrow$&&\\

&& $T_\Phi$
Levitt & & \\

&&$\Updownarrow$&&\\

 & & $T_{\Phi^{-1}}$ pseudo-surface & &\\

\hline

$\Phi$ pseudo-Levitt & $\Leftrightarrow$ & $T_\Phi$ and
$T_{\Phi^{-1}}$ non geometric & & \\

$\Updownarrow$&&$\Updownarrow$&&\\

$\Phi^{-1}$ pseudo-Levitt && $T_\Phi$
pseudo-Levitt& $\Leftrightarrow$ & \raisebox{0pt}[0pt][0pt]{$\begin{cases}
\indaut(\Phi)<N-1\\ \mbox{and}\\ \indaut(\Phi\inv)<N-1
\end{cases}$}\\

&&$\Updownarrow$&&\\ &&$T_{\Phi\inv}$ pseudo-Levitt&&\\ \hline
\end{tabular}
\end{center}

\section*{Acknowledgments} 

We thank Martin Lustig for his constant interest in our work and the
referee for the suggested improvements.

%\bibliographystyle{alpha}
%\bibliography{bibli}
%\end{document}

\bigskip

\noindent\textsc{Thierry Coulbois}\\
LATP\\
\textsc{Avenue de l'escadrille Normandie-Niémen}\\
13013 \textsc{Marseille}\\
\textsc{France}\\
thierry.coulbois\string@univ-amu.fr

\bigskip

\noindent\textsc{Arnaud Hilion}\\
LATP\\
\textsc{Avenue de l'escadrille Normandie-Niémen}\\
13013 \textsc{Marseille}\\
\textsc{France}\\
arnaud.hilion\string@univ-amu.fr

\end{document}